\documentclass[12pt,a4paper,twoside,reqno]{amsart} 
\usepackage{amsfonts, amsthm, amsmath, amssymb}
\usepackage{hyperref}
\hypersetup{colorlinks=false}

\usepackage[margin=2.9cm]{geometry}

\usepackage{helvet}

\usepackage{graphicx}
\usepackage{amsmath}

\usepackage{amsthm}
\newtheorem{theorem}{Theorem}
\newtheorem{lemma}{Lemma}[section]
\newtheorem*{remarks}{Remarks}

\begin{document}

\author{ Sumit Kumar, Kummari Mallesham and Saurabh Kumar Singh}
\title{Non-linear additive twists of $GL(3) \times GL(2)$ and  $GL(3)$ Maass forms}

\address{Sumit Kumar \newline  Stat-Math Unit, Indian Statistical Institute, 203 B.T. Road, Kolkata 700108, India; email: sumitve95@gmail.com}

\address{ Kummari Mallesham \newline Stat-Math Unit, Indian Statistical Institute, 203 B.T. Road, Kolkata 700108, India;  email:iitm.mallesham@gmail.com
}

\address{ Saurabh Kumar Singh \newline { Department of Mathematics and Statistics, Indian Institute of Technology, Kanpur  208016, India; \newline  Email: saurabs@iitk.ac.in
} }

\subjclass[2010]{Primary 11F66, 11M41; Secondary 11F55}
\date{\today}

\keywords{Maass forms, subconvexity, Rankin-Selberg $L$-functions}.
\maketitle

\begin{abstract}
	Let $\lambda_{\pi}(m,n)$ be  the Fourier coefficients of a Hecke-Maass cusp form $\pi$ for $SL(3,\mathbb{Z})$ and $\lambda_{f}(n)$ be the Fourier coefficients of Hecke-eigen form $f$ for $SL(2,\mathbb{Z})$.  The aim of this article is to get a non-trivial bound on the sum which is non-linear additive twist of the coefficients $\lambda_{\pi}(m,n)$ and $\lambda_{f}(n)$.  More precisely, for any $0 < \beta < 1$ and  $\epsilon>0$, we have 
$$\sum_{n=1}^{\infty} \lambda_{\pi}(r,n) \, e\left(\alpha n^{\beta}\right) V\left(\frac{n}{X}\right) \ll_{\pi,\epsilon} \alpha \sqrt{\beta}r^{\frac{7}{6}}X^{\frac{3}{4}+\frac{9\beta}{28}+ \epsilon}.$$
and 
$$\sum_{n=1}^{\infty} \lambda_{\pi}(r,n) \, \lambda_{f}(n) \, e\left(\alpha n^{\beta}\right) V\left(\frac{n}{X}\right) \ll_{\pi, f,\epsilon}   (\alpha \beta)^{\frac{3}{2}} rX^{\frac{3}{4}+\frac{29\beta}{44}+\epsilon},$$
where $V(x)$ is a smooth function supported in $[1,2]$ and satisfying $V^{(j)}(x) \ll_{j} 1$.
\end{abstract}

\section{\bf Introduction}

When $a = {a(n)}_{n \geq 1}$ is a sequence of complex numbers arising in an arithmetical context, a well-known problem is to obtain upper bounds for the sum 

\begin{equation} \label{general sequence}
\sum_{n \leq X} a(n) e \left( \alpha n^\beta \right)
\end{equation}

for integers $X>1$ and $\alpha, \beta$ given real numbers.  Here, as usual, $e(z)$ denotes $e^{2 \pi i z}$ for any complex number $z$. The sequence $a = { a(n) e \left( \alpha n^\beta \right)}_{n \geq 1}$  is, somewhat loosely, called a non-linear additive twist of the sequence a or simply an additive twist of $a$ if the case $\beta = 1$ is alone under consideration.

Fourier coefficients of automorphic forms provide a large supply of arithmetically interesting
sequences $ {a(n)}_{n \geq 1}$  and, indeed, in the $GL(2)$ setting, when the $ a(n)$ are the Fourier coefficients of a holomorphic modular form or a Maass form on the upper half plane, the study of the sum \eqref{general sequence} and its smoothed versions, for various $\alpha, \beta$ has a long history. We refer to H. Iwaniec \cite{iwaniec123} and to the more recent papers D. Godber \cite{godber} for a more detailed account of the results in this case.

The present article is concerned with smoothed dyadic versions of the sum \eqref{general sequence} when the $a(n)$ arise from the Fourier coefficients of certain $GL(3)$ or $GL(3) \times GL(2) $ automorphic forms. Thus let $ \lambda_\pi (m, n)$ be the Fourier-Whittaker coefficients of a Maass form $\pi$ for $SL(3, Z)$. Also, let $w$ be a function defined by
\begin{equation} \label{bump function}
w : \mathbb{R} \rightarrow \mathbb{C} \textrm{ is infinitely differentiable on } \mathbb{R} \textrm{ with support in } [1, 2].
\end{equation}
Then for all real $\alpha$, real $X > 1$ and integers $m$ in $[1, X]$ we have the following bound from \cite[Theorem 2]{Ren}

\begin{equation} \label{renbound}
\sum_{n=1}^{\infty} \lambda_{\pi}(m,n) \, e\left(\alpha n^{\beta}\right) V\left(\frac{n}{X}\right) \ll_{\pi, \alpha, \epsilon}  m^{\frac{5}{14} + \epsilon}  \left( 1+ ( \alpha X)^{\frac{3 \beta}{2}}  \right) \log X.
\end{equation}  for any real $ \beta > 0$, $\beta \neq 1/3$
and  $\epsilon> 0$. Since $w$ vanishes outside $[1, 2],$  the left hand side
of \eqref{renbound} is, in fact, a finite sum with $n$ ranging over the integers in the interval $[X, 2X]$, that is, a smoothed dyadic version of the sum \eqref{general sequence} with $a(n) = \lambda_\pi(m, n)$. Now, an application of the Cauchy-Schwarz inequality taken together with a well-known mean square bound for the $\lambda_\pi(m, n)$, reviewed in Section below, gives

\begin{equation} \label{trivial bound}
\sum_{n=1}^{\infty} \lambda_{\pi}(m,n) \, e\left(\alpha n^{\beta}\right) V\left(\frac{n}{X}\right) \ll_{\pi, \alpha, \epsilon}  X^{1/2} \left(\sum_{n\leq 2 X} \left| \lambda_{\pi}(m,n)\right|^2 \right)^{1/2} \ll_{\pi, \alpha, \epsilon} X^{1+ \epsilon} 
\end{equation}
for all $\epsilon > 0$ and any real $ X \geq 1$, real  $\alpha, \beta$ and integer $m \geq 1$. This bound may be thought of as the trivial estimate for the left hand side of \eqref{renbound}. Plainly, \eqref{renbound} gains over \eqref{trivial bound} in its dependence
on $X$ only if $0 < \beta \leq 2/3$. Our first result gains over \eqref{trivial bound}  in this aspect for  $\beta$ in a wider range :

\begin{theorem} \label{Thm}
Let $\lambda_\pi(r, n)$ be the Fourier-Whittaker coefficients of a Maass form $\pi$ for
$SL(3, \mathbb{Z})$. Then for any real $\alpha$, $0 < \beta <1$ and integer $ r \geq 1$ we have 
\begin{equation} \label{mainthe}
S_r(X) := \sum_{n=1}^{\infty} \lambda_{\pi}(r,n) \, e\left(\alpha n^{\beta}\right) V\left(\frac{n}{X}\right) \ll_{\pi, \epsilon}  \alpha \sqrt{\beta}r^{7/6}X^{3/4+9\beta/28}.
\end{equation}
for all $X\geq 1$, where $w(x)$ is as in equation \eqref{bump function}. 
\end{theorem}

 In its dependence on X the above bound improves on the trivial estimate \eqref{trivial bound}  when $ 0 <  \frac{3 }{4}+\frac{9 \beta}{24} <1$, i.e., $ \beta < \frac{7}{9}$. Also, bound in \eqref{mainthe} is a stronger bound than \eqref{renbound} when  $ \frac{3 }{4}+\frac{3 \beta}{10} < \frac{3 \beta}{2} $ and $\beta <1$, i.e., $\frac{7 }{11} < \beta <1 $. 

Upper bounds for the sum on the left hand side of \eqref{mainthe} when $\beta = \frac{2}{3}$
are especially interesting. In this case, \eqref{renbound} gives essentially the trivial upper bound of \eqref{trivial bound}, while the exponent of $X$ yielded
by our bound \eqref{mainthe} is $ 27 /28 + \epsilon$. We remark that if this exponent could be lowered to $\theta + \epsilon$   for some $\theta < 5/6$, it would be possible to show that the standard L-function $L(s, \pi)$ attached to $\pi$ has infinitely many zeros on the critical line $ \Re s = 1/2$ by adapting methods that lead to Hardy's theorem on the infinitude of zeros of the Riemann zeta function on this line (see \cite{rajkumar}).

Our method for proving Theorem \ref{Thm} is quite different from that in \cite{Ren}, which is based on the $GL(3)$-Voronoi summation formula. We use the method developed by R. Munshi in \cite{munshi1}, \cite{munshi2} and  \cite{munshi3}. Briefly speaking, this method involves an application of circle method to separate the oscillatory terms in the sum on the left hand side of \eqref{mainthe}  followed by the application of various
summation formulae. Our method is flexible enough to allow us to prove a result analogous to
Theorem \ref{mainthe} for the Rankin-Selberg convolution $\pi  \times f$ of $ GL(3)$ and $GL(2)$ Maass forms. Thus, we also have the following theorem:

\begin{theorem} \label{gl3thm21}
Let $\lambda_f (n)$ be the Fourier coefficients of a holomorphic/  Maass form $f$ for $SL(2, \mathbb{Z}) $ and let $\lambda_{\pi}(r,n) $ be as in the statement of Theorem \ref{mainthe}.  Then for any $\beta$ such that $1/3 < \beta <11/29$, $\alpha > 1/\beta$  and integer $ r \geq 1$ we have 
\begin{equation} \label{gl3gl2}
\sum_{n=1}^{\infty} \lambda_{\pi}(r,n) \lambda_f (n)\, e\left(\alpha n^{\beta}\right) V\left(\frac{n}{X}\right) \ll_{\pi, \alpha,\epsilon}    (\alpha \beta)^{\frac{3}{2}} rX^{\frac{3}{4}+\frac{29\beta}{44}+\epsilon}. 
\end{equation}
for all $X\geq 1$, where $w(x)$ is as in equation \eqref{bump function}. 
\end{theorem}

\begin{remarks}
\begin{enumerate}
	\item We compare the bound in \eqref{mainthe} with that  in \eqref{renbound}. Our bound is better in the range $\beta > 7/11$. Infact, it gives power saving bound when $\beta <7/9$. In our treatment of the sum $S(X)$, we separated $GL(3)$ Fourier coefficients $\lambda_{\pi}(m,n)$ from $e\left(\alpha n^{\beta}\right)$ using the circle method which gives us  more flexibility while applying the summation formulas and treating the integrals.  
	\item   We have only focused on the range of the parameter $\beta$ for which we get the power saving bound. Infact, the case $\beta =2/3$ is of particular interest as it is related to Hardy's type theorem for $GL(3)$ $L$-function. In this case, we have $S_r(X) \ll_{\epsilon} X^{27/28 + \epsilon}$, which is a step towards  proving Hardy's type theorem .
	 
	 \item This is the first instance where non-trivial estimates of non-linear twists of $GL(3) \times GL(2)$ have been achieved. The estimate given in equation \eqref{gl3gl2} gives power saving bound when $\beta < 11/29$.

	

\end{enumerate} 
\end{remarks}
\subsection{Sketch of the proof of Theorem \ref{Thm}}
The proof of Theorem \ref{Thm} starts in  Section \ref{aplicircle}.  Now we give a sketch of the proof  briefly. For simplicity, we assume that $r=1$ and $n \sim X$. Thus our object is to get cancellations in the following sum 
$$S(X) = \sum_{n \sim X} \lambda_{\pi}(1,n) e\left(\alpha n^{\beta}\right).$$
As a first step, following the methods of Munshi \cite{munshi2}, we separate the oscillations of $\lambda_{\pi}(1,n)$ and $e(\alpha n^{\beta})$ by using the circle method as follows
$$ S(X) = \frac{1}{K} \int V \left(\frac{v}{K} \right) \mathop{\sum \sum}_{n,m \sim X} \lambda_{\pi}(1,n) n^{iv} e\left(\alpha m^{\beta}\right) m^{-iv} \delta(n-m) \mathrm{d}v$$
where $X^{\epsilon} < K < X^{\beta}$ and $\int V =1$. Writing the Fourier expansion for $\delta(n-m)$ we see that $S(X)$ is, roughly, given by
\begin{align*}
\frac{1}{Q^2 K} \int_{K}^{2K} & \sum_{q \sim Q} \,  \sideset{}{^\star}{\sum}_{a \, \rm mod \, q} \, \sum_{n \sim X} \lambda_{\pi}(1,n) n^{iv} e\left(\frac{an}{q}\right) \\
& \times \sum_{m \sim X} m^{-iv} e\left(\alpha m^{\beta}\right) e\left(\frac{-am}{q}\right) \, \mathrm{d}v
\end{align*}
where $Q=\sqrt{X/K}$. Trivially estimating the above sum, we get
$$S(X) \ll X^{2+\epsilon}.$$
So, to get cancellation in the sum $S(X)$ we need to save $X$ (and little more) in a sum of the form
$$\int_{K}^{2K} \sum_{q \sim Q} \,  \sideset{}{^\star}{\sum}_{a \, \rm mod \, q} \, \sum_{n \sim X} \lambda_{\pi}(1,n) n^{iv} e\left(\frac{an}{q}\right) \, \sum_{m \sim X} m^{-iv} e\left(\alpha m^{\beta}\right) e\left(\frac{-am}{q}\right).$$

The second step is to apply summation formulas on the $m,n$ sums. An application of the Poisson summation formula on the $m$-sum gives a saving $X/\sqrt{Q X^{\beta}}$ and an application of the $GL(3)$ Voronoi summation formula gives a saving $X/\sqrt{(QK)^3}$. Moreover, we get $\sqrt{Q}$ saving in the $a$-sum and $\sqrt{K}$ saving in the $v$-integral. Thus, at end of the summation formulas our total saving is  
$$\frac{X}{\sqrt{Q X^{\beta}}} \frac{X}{\sqrt{(QK)^3}} \sqrt{Q} \sqrt{K} = X \frac{X^{\frac{1}{4}-\frac{\beta}{2}}}{K^{\frac{1}{4}}}.$$
Therefore, we need to save $K^{1/4} X^{\frac{\beta}{2}-\frac{1}{4}}$ and little more in the following transformed sum (after summation formulas)
$$\sum_{n \sim \frac{Q^3 K^3}{X}} \lambda_{\pi}(1,n) \sum_{q \sim Q} \sum_{\substack{(m,q)=1 \\ m \sim \frac{Q X^{\beta}}{X}}} S(\bar{m},n;q) I(n,m,q)$$
where $I(n,m,q)$ is an integral transform.

The next step involves applying Cauchy-Schwarz inequality in the $n$-sum to get rid of the Fourier coefficients $\lambda_{\pi}(1,n)$, here we use Ramnujan bound on the average for $\lambda_{\pi}(1,n)$.  So we have to save $K^{1/2} X^{\beta -1/2}$ in the following sum
$$\sum_{n \sim \frac{Q^3 K^3}{X}}  \vert \sum_{q \sim Q} \sum_{\substack{(m,q)=1 \\ m \sim \frac{Q X^{\beta}}{X}}} S(\bar{m},n;q) I(n,m,q) \vert^2.$$
Now we open the absolute square and apply the Poisson summation formula over the $n$ sum. The resulting zero frequency contribution is satisfactory if 
$$\frac{Q^2 X^{\beta}}{X} > K^{1/2} X^{\beta -1/2}$$
or equivalently $K < X^{1/3}$. So,  we get following  saving in the non-zero frequencies  
$$\frac{Q^3 K^3}{X Q K^{2/3}} = K^{4/3}.$$
This is sufficient if $K^{4/3} > K^{1/2} X^{\beta -1/2}$. Thus we get a condition 
$$X^{6\beta/5 - {3}/{5}} < K < X^{\frac{1}{3}}.$$ 
Therefore, we get a power saving bound on the sum $S(X)$ when $\beta < 7/9$. Now we choose $K$ by equating the diagonal and off-diagonal savings
$$\frac{Q^2 X^{\beta}}{X} = \frac{Q^3 K^3}{XQ K^{2/3}} \implies K = X^{\frac{3\beta}{7}}$$
and the total saving is 
$$X \frac{X^{\frac{1}{4}-\frac{\beta}{2}}}{K^{\frac{1}{4}}} K^{2/3} = X^{\frac{5}{4}-\frac{9 \beta}{28}}.$$
Therefore, we get
$$S(X) \ll \frac{X^{2+\epsilon}}{X^{\frac{5}{4}-\frac{3 \beta}{10}}}= X^{\frac{3}{4}+\frac{ 9\beta}{28}+\epsilon}.$$

\subsection{Sketch of the proof of Theorem \ref{gl3thm21}}
	We now give the sketch of the proof of Theorem \ref{gl3thm21}. Like Theorem \ref{Thm}, we need to get cancellations in the following sum 
	$$R(X) = \sum_{n \sim X} \, \lambda_{\pi}(1,n) \, \lambda_{f}(n) \, e\left(\alpha n^{\beta}\right). $$
	On separating the oscillations of $\lambda_\pi(1,n)$ and $\lambda_{f}(n) \, e\left(\alpha n^{\beta}\right)$ using the the circle method we arrived at the following expression 
	\begin{align*}
	R(X) = \frac{1}{K} \int V \left(\frac{v}{K} \right) \mathop{\sum \sum}_{n,m =1}^\infty \lambda_{\pi}(1,n) n^{iv} \, \lambda_{f}(m) \,& e\left(\alpha m^{\beta}\right) m^{-iv} U \left( \frac{n}{X}\right)  \\ 
	&   \times V\left( \frac{m}{X}\right)\delta(n-m) \, \mathrm{d}v
	\end{align*}
	where $X^{\epsilon} < K < X^{\beta}$ and $\int_{\mathbb{R}} V (u) du=1$. On substituting  the Fourier expansion for $\delta(n-m)$ from equation \eqref{deltasymbol},  we obtain
	\begin{align*}
	R(X)=\frac{1}{Q K} & \int_{\mathbb{R}} g(q, x)\int V \left(\frac{v}{K} \right)   \sum_{q \leq Q}  \frac{1}{q}\,  \sideset{}{^\star}{\sum}_{a \, \rm mod \, q} \, \sum_{n =1}^\infty  \lambda_{\pi}(1,n) n^{iv} e\left(\frac{an}{q}\right) e\left(\frac{xn}{qQ}\right)  U \left( \frac{n}{X}\right)  \\
	& \times \sum_{m =1}^\infty \, \lambda_{f}(m) \, m^{-iv} e\left(\alpha m^{\beta}\right) e\left(\frac{-am}{q}\right) \,  e\left(\frac{-mx}{qQ}\right)  V \left( \frac{m}{X}\right) \mathrm{d}v \,  {d}x, 
	\end{align*}
	where $Q=\sqrt{X/K}$. Trivially estimating the above sum, we get
	$$R(X) \ll X^{2+\epsilon}.$$
	So, to get cancellation in the sum $R(X)$ we need to save $X$ (and little more) in a sum of the form
	$$\int_{K}^{2K} \sum_{q \sim Q} \,  \sideset{}{^\star}{\sum}_{a \, \rm mod \, q} \, \sum_{n \sim X} \lambda_{\pi}(1,n) n^{iv} e\left(\frac{an}{q}\right) \, \sum_{m \sim X} \, \lambda_{f}(m) \, m^{-iv} e\left(\alpha m^{\beta}\right) e\left(\frac{-am}{q}\right) \, dv.$$
	Next we apply Voronoi summation formulas on the  $m$ and $n$ sums. On applying $GL(3)$ Voronoi summation formula on the $n$-sum, the dual length becomes $n^{*} \sim Q^3 K^3 /X$ and we save $X/\sqrt{Q^3 K^3}$.
	
	An application of $GL(2)$ Voronoi on the $m$-sum  converts the $m$-sum into a dual sum of the length $m^{*} \sim Q^2 X^{2 \beta}/X$ and this gives a saving of size $X/Q X^{\beta} $. 
	
	Like before, we save $\sqrt{Q}$ in the $a$-sum and $\sqrt{K}$ in the $v$-integral. So far we have the following saving
	$$\frac{X}{\sqrt{Q^3 K^3}} \frac{X}{Q X^{\beta}} \sqrt{Q} \sqrt{K} = \frac{X}{X^{\beta}}.$$
	Therefore, we need to save $X^{\beta}$ and little more in the following transformed sum 
	$$ \sum_{q \sim Q} \sum_{n^{*} \sim \frac{Q^3 K^3}{X}} \, \lambda_{\pi}(1,n) \sum_{m^{*} \sim \frac{Q^2 X^{2 \beta}}{X}} \, \lambda_{f}(m) \, \mathfrak{C} \, \mathfrak{I}$$
	where $\mathfrak{I}$ is an integral transform which oscillates like $n^{i K}$ with respect to $n$, and the character sum is given by 
	$$\mathfrak{C} = \sideset{}{^{*}}{\sum}_{a \, \rm mod \, q} S\left(\bar{a},n;q\right) \, e\left(\frac{\bar{a} m}{q}\right) \rightsquigarrow q e\left(-\frac{\bar{m} n}{q}\right).$$
	In the next step we apply the Cauchy inequality in the $n$-sum to get rid of the coefficients $\lambda_{\pi}(1,n)$ and we arrive at the following expression 
	$$\sum_{n^{*} \sim \frac{Q^3 K^3}{X}} \Big \vert \sum_{q \sim Q} \sum_{m^{*} \sim \frac{Q^2 X^{2 \beta}}{X}} \lambda_{f}(m) \, e\left( - \frac{\bar{m} n}{q}\right) \, \mathfrak{I}\Big \vert^{2},$$
	where we seek to save $X^{2 \beta}$ plus little more. Opening the absolutely value square we apply the Poisson summation formula on the $n$-sum. In the zero frequency we save $Q^3 X^{2 \beta}/X$ which is satisfactory if 
	$$\frac{Q^3 X^{2 \beta}}{X} > X^{2 \beta}$$
	or equivalently $K < X^{1/3}$. In the non-zero frequency we save 
	$$\frac{Q^3 K^3}{X K^{2/3}}$$ 
	which is sufficient if 
	$$\frac{Q^3 K^3}{X K^{2/3}} > X^{2 \beta}$$
	i.e., $K > X^{12 \beta/5 - {3}/{5}}$. Thus we get the following  restriction on the choice  of $K$
	$$X^{12 \beta/5 - {3}/{5}} < K < X^{1/3}.$$
	Hence, we have a room to choose $K$ optimally. 
We end the introduction by defining some notations.
\paragraph*{\bf{Notations}} Throughout the paper, $e(x)$ means $e^{2\pi i x}$ and negligibly small means $O(X^{-A})$ for any $A >0$. In particular, we will take $A=2020$. The notation $\alpha \ll A$ will mean that for any $\epsilon >0$, there is a constant $c$ such that $|\alpha| \leq cAX^{\epsilon}$. By $\alpha \asymp A$, we mean that $X^{-\epsilon}A \leq \alpha \leq X^{\epsilon}A$, also $\alpha \sim A$ means $A\leq \alpha < 2A$.

 \section{ \bf Preliminaries}
 \subsection{The Delta method} \label{circlemethod}
 Let $\delta: \mathbb{Z} \to \{0,1\}$ be defined by
 \[
 \delta(n)= \begin{cases}
 1 \quad \text{if} \,\  n=0; \\
 0 \quad  \,  $\textrm{otherwise}$.
 \end{cases}
 \]
 The above delta symbol can be used to separate the oscillations involved in a sum. Further, we seek a Fourier expansion of $\delta(n)$. We mention here an expansion for $\delta(n)$ which is due to Duke, Friedlander and Iwaniec. Let $L\geq 1$ be a large number. For $n \in [-2L,2L]$, we have
 
 \begin{align*} 
 \delta(n)= \frac{1}{Q} \sum_{1 \leq q \leq Q} \frac{1}{q} \, \sideset{}{^\star}{\sum}_{a \, \rm mod \, q} \, e \left(\frac{na}{q}\right) \int_{\mathbb{R}} g(q,x) \,  e\left(\frac{nx}{qQ}\right) \, \mathrm{d}x,
 \end{align*}
 where  $Q=2L^{1/2}$. The $\star$ on the sum indicates that the sum over $a$ is restricted by the condition $(a,q)=1$. The function $g$ is the only part in the above formula which is not explicitly given. Nevertheless, we only need the following two properties of $g$ in our analysis.  
 
 \begin{align} \label{g properties}
 &g(q,x)=1+h(q,x), \quad \text{with} \ \ \  h(q,x)=O \left(\frac{1}{qQ} \left(\frac{q}{Q}+|x|\right)^{B}\right), \\
 & g(q,x) \ll |x|^{-B} \notag
 \end{align}
 for any $B>1$. Using the second property of $g(q,x)$ we observe that the effective range of the integration in \eqref{deltasymbol}  is $[-L^{\epsilon},L^{\epsilon}]$. We record the above observations in the following lemma.
 \begin{lemma}\label{deltasymbol}
 	Let $\delta$ be as above and $g$ be a function satisfying \eqref{g properties}. Let $L\geq 1$ be a large parameter. Then, for $n \in [-2L,2L]$, we have
 	\begin{equation*} 
 	\delta(n)= \frac{1}{Q} \sum_{1 \leq q \leq Q} \frac{1}{q} \, \sideset{}{^\star}{\sum}_{a \, \rm mod \, q} \, e \left(\frac{na}{q}\right) \int_{\mathbb{R}}W(x) g(q,x) \,  e\left(\frac{nx}{qQ}\right) \, \mathrm{d}x+O(L^{-2020}),
 	\end{equation*}
 	where $Q=2L^{1/2}$ and  $W(x)$ is a smooth bump function supported in $[-2L^{\epsilon},2L^{\epsilon}]$, with $W(x)=1$ for $x \in [-L^{\epsilon},L^{\epsilon}] $ and $W^{(j)}\ll_j 1$.
 \end{lemma}
 \begin{proof}
 	For the proof, we refer to chapter 20 of the book \cite{iwaniec}.
 \end{proof}

\subsection{ Holomorphic forms on $GL(2)$}
Let $f$ be a holomorphic Hecke eigenform  of weight $k$ for the full modular group $SL(2,\mathbb{Z})$. The Fourier expansion of $f$ at $\infty$ is 
$$f(z) = \sum_{n=1}^{\infty} \, \lambda_{f}(n) \, n^{(k-1)/2} \, e(nz),$$
for $z \in \mathbb{H}$. We have a well-known  Deligne's bound for the Fourier coefficients which says that  
\begin{align}\label{gl2 ramanujan}
\vert \lambda_{f}(n) \vert \leq d(n),
\end{align}
for  $n\geq 1$, where $d(n)$ is the divisor function. We now state the Voronoi summation formula for $f$ in the following lemma. 

\begin{lemma} \label{gl2 voronoi}
	Let $\lambda_{f}(n)$ be as above and $g$ be a smooth, compactly supported function on $(0, \infty)$. Let $a$, $q \in \mathbb{Z}$ with $(a,q)=1$. Then we have
	$$\sum_{n=1}^{\infty} \lambda_{f}(n) \, e\left(\frac{an}{q}\right)g(n) = \frac{2\pi i^k}{q} \sum_{n=1}^{\infty} \lambda_{f}(n) \, e\left(-\frac{d n}{q}\right)\, h(n),$$
	where $ad \equiv 1 (\rm mod \, q)$ and 
	$$h(y) = \int_{0}^{\infty} g(x) \, J_{k-1} \left(\frac{4 \pi \sqrt{xy}}{q}\right) \, dx.$$
\end{lemma}
\begin{proof}
	Proof can be found in the book of Iwaniec-Kowalski \cite{iwaniec}. 
\end{proof}
 \subsection{ Maass forms for $GL(3)$}
 Let $\pi$ be a Maass form of type $(\nu_{1}, \nu_{2})$ for $SL(3, \mathbb{Z})$. By the work of Jacquet, Piatetski-Shapiro and Shalika, we have the Fourier Whittaker expansion of $\pi(z)$:
 \begin{equation} \label{four-whi-exp}
 \pi(z) = \sum_{\gamma \in U_{2}\left(\mathbb{Z}\right) \backslash  SL(2,\mathbb{Z})} \sum_{m_{1}=1}^{\infty} \sum_{m_{2} \neq 0} \frac{\lambda_{\pi}(m_{1},m_{2})}{m_{1} |m_{2}|} \, W_{J}\left(M \left( {\begin{array}{cc}
 	\gamma &  \\
 	 & 1 \\
 	\end{array} } \right)
 z, \nu, \psi_{1,1}\right),
 \end{equation}
 where $U_{2}(\mathbb{Z})$ is the group of upper triangular matrices with integer entries and ones on the diagonal,
 $W_{J}\left(z,\nu,\psi_{1,1}\right)$ is the Jacquet-Whittaker function, and $M=\textbf{diag} \left(m_{1}|m_{2}|,m_{1},1\right)$ (cf. Goldfeld \cite{gold}).
  
 Let $\psi(x)$ be a compactly supported smooth function on  $ (0, \infty )$ and $$\tilde{\psi}(s) = \int_{0}^{\infty} \psi(x) x^{s-1} \mathrm{d}x$$ be its Mellin transform. Set 
 $$\alpha_{1} = - \nu_{1} - 2 \nu_{2}+1, \alpha_{2} = - \nu_{1}+ \nu_{2}, \alpha_{3} = 2 \nu_{1}+ \nu_{2}-1.$$
 For $k=0$ and $1$, we define
 \begin{equation}
 \Psi_{k}(x) := \int_{(\sigma)} \left(\pi^{3} x\right)^{-s} \prod_{i=1}^{3} \frac{\Gamma\left(\frac{1+s+2k+\alpha_{i}}{2}\right)}{\Gamma\left(\frac{-s-\alpha_{i}}{2}\right)} \tilde{\psi}(-s-k) \,  \mathrm{d}s
 \end{equation}
 with $\sigma > -1 + \max \{-\Re(\alpha_{1}), -\Re(\alpha_{2}), \Re(\alpha_{3})\}$, and
 \begin{equation} \label{psicombina}
 \Psi_{0,1}^{\pm}(x)= \Psi_{0}(x)\pm \frac{\pi^{-3} q^{3}m}{n_{1}^2 n_{2} i} \Psi_{1}(x).
 \end{equation}
 With the aid of the above terminology we now state $GL(3)$-Voronoi summation formula in the following lemma. 
 \begin{lemma} \label{gl3voronoi}
 	Let $\psi(x)$ and  $\lambda_{\pi}(n,r)$ be as above. Let $a,\bar{a}, q \in \mathbb{Z}$ with $q \neq 0, (a,q)=1,$ and  $a\bar{a} \equiv 1(\mathrm{mod} \ q)$. Then we have
 	\begin{align*} \label{GL3-Voro}
 	\sum_{n=1}^{\infty} \lambda_{\pi}(r.n) e\left(\frac{an}{q}\right) \psi(n) 
 	=q  \sum_{\pm} \sum_{n_{1}|qr} \sum_{n_{2}=1}^{\infty}  \frac{\lambda_{\pi}(n_2,n_1)}{n_{1} n_{2}} S\left(r \bar{a}, \pm n_{2}; qr/n_{1}\right) \psi_{0,1}^{\pm} \left(\frac{n_{1}^2 n_{2}}{q^3 r}\right),
 	\end{align*} 
 	where  $S(a,b;q)$ is the  Kloosterman sum which is defined as follows:
 	$$S(a,b;q) = \sideset{}{^\star}{\sum}_{x \,\rm mod \, q} e\left(\frac{ax+b\bar{x}}{q}\right).$$
 \end{lemma}
 \begin{proof}
 	See \cite{miller-schmid} for the proof. 
 \end{proof}
  We need asymptotic behaviour of the function $\Psi_{0,1}^{\pm}(x)$ in our analysis, but $\Psi_{0,1}^{\pm}(x)$ is itself the combination of the functions $\Psi_{0}(x)$ and $\Psi_{1}(x)$ as given in \eqref{psicombina}. So we need the asymptotic behaviour of these individual functions, but we know  from \cite[ page.no: 307]{X.Li} that the asymptotic behaviour of $x^{-1} \Psi_{1}(x)$ is similar to that of $\Psi_{0}(x)$. Hence we only need to know  the behaviour of $\Psi_{0}(x)$ which is given in the following lemma.  
 
\begin{lemma} \label{GL3oscilation}
 	Let $\psi(x)$ be as above, with support in the interval $[X,2X]$. Then for any fixed integer $J \geq 1$ and $xX \gg 1$, we have
 	\begin{equation*}
 	\psi_{0}(x)=2\pi^4i x \int_{0}^{\infty} \psi(y) \sum_{j=1}^{J} \frac{c_{j} e\left(3 (xy)^{1/3} \right) + d_{j} e\left(-3 (xy)^{1/3} \right)}{\left( \pi^3xy\right)^{j/3}} \, \mathrm{d} y + O \left((xX)^{\frac{-J+2}{3}}\right),
 	\end{equation*}
 	where $c_{j}$ and $d_{j}$ are  absolute constants depending on $\alpha_{i}$, for $i=1, 2, 3$.  
 \end{lemma}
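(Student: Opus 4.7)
The plan is to extract the asymptotic expansion of $\Psi_0(x)$ from its Mellin--Barnes representation by first simplifying the triple Gamma ratio to a form with one standard Gamma factor per $\alpha_i$ times an explicit trigonometric weight, and then analyzing the resulting integral by steepest descent. Substituting $\tilde{\psi}(-s) = \int_0^\infty \psi(y) y^{-s-1}\,dy$ and interchanging the integrations gives
$$\Psi_0(x) = \int_0^\infty \psi(y)\, K(\pi^3 xy)\,\frac{dy}{y}, \qquad K(u) = \int_{(\sigma)} u^{-s} \prod_{i=1}^3 \frac{\Gamma((1+s+\alpha_i)/2)}{\Gamma((-s-\alpha_i)/2)}\,ds,$$
so the task reduces to obtaining a complete asymptotic expansion for $K(u)$ as $u \to \infty$.

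Next I would simplify each factor in the integrand. Combining the duplication identity $\Gamma(2z) = (2\pi)^{-1/2} 2^{2z-1/2}\Gamma(z)\Gamma(z+1/2)$ with the reflection formula $\Gamma(w)\Gamma(1-w) = \pi/\sin(\pi w)$ yields the identity
$$\frac{\Gamma((1+s+\alpha)/2)}{\Gamma((-s-\alpha)/2)} = -\sqrt{2/\pi}\cdot 2^{-s-\alpha-1/2}\,\Gamma(s+\alpha+1)\,\sin(\pi(s+\alpha)/2).$$
Taking the product over the three $\alpha_i$ and using $\alpha_1+\alpha_2+\alpha_3=0$ collapses the prefactors to give
$$K(u) = -\pi^{-3/2}\int_{(\sigma)} (8u)^{-s} \prod_{i=1}^3 \Gamma(s+\alpha_i+1)\sin(\pi(s+\alpha_i)/2)\,ds.$$
Expanding each sine as $\tfrac{1}{2i}(e^{i\pi(s+\alpha_i)/2} - e^{-i\pi(s+\alpha_i)/2})$ and multiplying out produces eight elementary sub-integrals whose $s$-exponentials are $e^{i\pi\epsilon s/2}$ for $\epsilon \in \{\pm 1,\pm 3\}$.

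For each such $\epsilon$ I would apply Stirling's expansion $\log\Gamma(s+\alpha_i+1) = (s+\alpha_i+1/2)\log s - s + \tfrac{1}{2}\log(2\pi) + O(|s|^{-1})$, uniformly on shifted contours avoiding the Gamma poles. The resulting exponent has a unique saddle at
$$s_\epsilon = 2\pi (xy)^{1/3} e^{-i\pi\epsilon/6}.$$
For $\epsilon = \pm 3$ the saddle sits on the imaginary axis, and steepest descent on the contour $\Re s = 0$ produces a leading contribution of modulus $\asymp u^{2/3}$ with phase $e^{\pm 6\pi i(xy)^{1/3}}$; combining the two complex-conjugate contributions produces the leading $c_1\cos(6\pi(xy)^{1/3}) + d_1\sin(6\pi(xy)^{1/3})$ structure, with coefficients $c_1,d_1$ determined by the $e^{\pm i\pi\alpha_i/2}$ factors released from the $\sin(\pi(s+\alpha_i)/2)$ expansion. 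For $\epsilon = \pm 1$ the saddles lie off the imaginary axis at $s_{\pm 1} = \pi(xy)^{1/3}(\sqrt{3}\mp i)$, and direct evaluation at the saddle shows the contribution carries an exponentially small factor $e^{-3\sqrt{3}\pi(xy)^{1/3}}$, so it is absorbed into the error once $xY \gg 1$.

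Pushing Stirling's expansion to higher order converts each $\epsilon = \pm 3$ saddle contribution into a full power series in $(xy)^{-1/3}$; the $j$-th term has precisely the shape $(\pi^3 xy)^{-j/3}(c_j\cos + d_j\sin)(6\pi(xy)^{1/3})$, and truncating at $j=J$ produces an error of size $O((xy)^{(-J+2)/3})$ via the standard saddle-point tail bound. Integrating against $\psi(y)/y$ on the support $[Y,2Y]$ propagates this to $O((xY)^{(-J+2)/3})$, and the overall prefactor $2\pi^4 ix$ emerges after collecting the constants $-\pi^{-3/2}$, the $(2\pi)^{3/2}$ from Stirling, and the Jacobian from reinstating $u = \pi^3 xy$. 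I expect the main technical obstacle to be the uniform justification of the contour deformation and Stirling's expansion on the steepest descent paths, together with verifying that the poles of $\Gamma(s+\alpha_i+1)$ crossed during the shift (at $s = -\alpha_i - k$) contribute only admissibly small residues under the Jacquet--Shalika/Kim--Sarnak bounds on $\Re\alpha_i$.
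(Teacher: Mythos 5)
The paper itself contains no proof of this lemma: it is quoted verbatim from X.~Li \cite[p.~307]{X.Li} (her Lemma~2.1), so there is no internal argument to compare against. Your sketch is essentially the standard argument behind Li's lemma, and its computational core checks out: the reflection/duplication identity $\Gamma\bigl(\tfrac{1+w}{2}\bigr)/\Gamma\bigl(-\tfrac{w}{2}\bigr)=-\pi^{-1/2}2^{-w}\Gamma(w+1)\sin(\pi w/2)$ is correct; using $\alpha_1+\alpha_2+\alpha_3=0$ the triple product does become $-\pi^{-3/2}\int_{(\sigma)}(8u)^{-s}\prod_i\Gamma(s+\alpha_i+1)\sin\bigl(\pi(s+\alpha_i)/2\bigr)\,\mathrm{d}s$ with $u=\pi^3xy$; the saddles $s_\epsilon=2\pi(xy)^{1/3}e^{-i\pi\epsilon/6}$, the phases $e^{\pm 6\pi i (xy)^{1/3}}$ and amplitude $\asymp u^{2/3}$ for $\epsilon=\pm3$, the exponential factor $e^{-3\sqrt{3}\pi(xy)^{1/3}}$ for $\epsilon=\pm1$, and the truncation error $O\bigl((xY)^{(2-J)/3}\bigr)$ all match the shape of \eqref{GL3oscilation}. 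Li's own proof applies Stirling directly to the ratio of Gamma factors on the vertical line and then stationary phase in $\Im s$; your variant (exact trigonometric form first, then a complex saddle point) is the same method in different clothing, so nothing is gained or lost except that the oscillation $6\pi(xy)^{1/3}$ appears very transparently from $e^{-3s_{\pm3}}$.

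Three points need care if you carry this out. First, after interchanging the $y$- and $s$-integrations the kernel $K(u)=\int_{(\sigma)}u^{-s}\prod_i\Gamma\bigl(\tfrac{1+s+\alpha_i}{2}\bigr)/\Gamma\bigl(\tfrac{-s-\alpha_i}{2}\bigr)\,\mathrm{d}s$ is not absolutely convergent (the Gamma ratio grows like $|\Im s|^{3\sigma+3/2}$ on vertical lines), so you must either keep the rapidly decaying factor $\tilde{\psi}(-s)$ inside until after the contour work or interpret $K(u)$ as an oscillatory integral via integration by parts in $\Im s$. Second, for $\epsilon=\pm1$ the saddle does not lie on the contour, and evaluating the integrand there proves nothing by itself; you need the actual steepest-descent deformation (equivalently, a rightward shift of the contour by $\asymp u^{1/3}$ followed by trivial estimation, or a non-stationary-phase argument on the original line) to legitimately extract the factor $e^{-3\sqrt{3}\pi(xy)^{1/3}}$ --- you flag this, and it is indeed where the remaining work lies. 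Third, your worry about crossing poles is moot: the poles of $\Gamma(s+\alpha_i+1)\sin\bigl(\pi(s+\alpha_i)/2\bigr)$ sit at $s=-\alpha_i-1-2m$, hence in $\Re s\le \max_i(-\Re\alpha_i)-1<\sigma$, while all deformations you need move the contour rightwards or to a neighbourhood of the imaginary axis, so no residues arise and no appeal to Kim--Sarnak type bounds is needed beyond the condition on $\sigma$ already built into the definition of $\Psi_0$. The constants $2\pi^4 i$, $c_j$, $d_j$ are left unevaluated, but since the lemma only asserts they depend on the $\alpha_i$, that bookkeeping is optional.
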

 \begin{proof}
 	See  \cite{X.Li}.
 \end{proof}

\begin{remarks}
\begin{enumerate}
	\item In the case  $xY \ll 1$, we have $$\Psi_{0}(x) \ll \int_{0}^{\infty} |\psi^{\prime}(x)| \mathrm{d}x + 1,$$ as mentioned in the remark of X. Li \cite[ page no: 307]{X.Li}. It can be seen easily that we save more in this case, in our analysis.                           
	
	\item In our analysis, we  will work  with the leading term, i.e, $j=1$ only in Lemma \ref{GL3oscilation} as the contributions of other terms gives us even better estimates.  
\end{enumerate}
\end{remarks}
 
 We end this subsection by mentioning the Ramanujan bound on average for the Fourier coefficients $\lambda_{\pi}(n_{1},n_{2})$ in  the following lemma.
 \begin{lemma} \label{ramanujan}
 	We have 
 \begin{equation} 
 \mathop{\sum \sum}_{n_{1}^2 n_{2} \leq x} \vert \lambda_{\pi}(n_{1},n_{2})\vert^{2} \; \ll \; x^{1+\epsilon}.
 \end{equation}
 \end{lemma}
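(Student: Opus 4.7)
The plan is to obtain the stated bound from the analytic properties of the Rankin-Selberg $L$-function $L(s, \pi \times \tilde{\pi})$ attached to $\pi$ and its contragredient. The starting point, obtained by expanding the Hecke relations for $SL(3,\mathbb{Z})$ at each prime and comparing Euler factors, is the Dirichlet series identity
$$D(s) := \sum_{n_{1}, n_{2} \geq 1} \frac{|\lambda_{\pi}(n_{1},n_{2})|^{2}}{n_{1}^{2s} n_{2}^{s}} = \frac{L(s, \pi \times \tilde{\pi})}{\zeta(3s)},$$
valid in the region of absolute convergence $\Re(s) > 1$ (this is the form recorded in Goldfeld's text, cited in the statement). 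By the work of Jacquet, Piatetski-Shapiro and Shalika, $L(s, \pi \times \tilde{\pi})$ extends meromorphically to $\mathbb{C}$ with a single simple pole at $s=1$ and is of polynomial growth in vertical strips, while $\zeta(3s)^{-1}$ is holomorphic and polynomially bounded for $\Re(s) \geq 1/3 + \epsilon$. Hence $D(s)$ inherits the same analytic behaviour.

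With these inputs, I would apply Perron's formula to the partial sum
$$A(x) := \mathop{\sum \sum}_{n_{1}^{2} n_{2} \leq x} |\lambda_{\pi}(n_{1},n_{2})|^{2} = \frac{1}{2 \pi i} \int_{c - iT}^{c + iT} D(s) \, \frac{x^{s}}{s} \, ds \; + \; \text{error},$$
with $c = 1 + 1/\log x$ and a suitable truncation parameter $T$. Shifting the contour to $\Re(s) = 1 - \eta$ for a small $\eta > 0$, the only singularity crossed is the simple pole of $L(s, \pi \times \tilde{\pi})$ at $s=1$, whose residue contributes a main term of order $x$. The shifted vertical line and the joining horizontal segments are controlled by the convexity bound for $L(s, \pi \times \tilde{\pi})$ in vertical strips, giving a contribution of size $O(x^{1 - \eta + \epsilon})$, and tuning $T$ against the Perron truncation error gives the claimed estimate $A(x) \ll x^{1+\epsilon}$.

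A cleaner alternative is to invoke a Tauberian theorem of Landau-Wiener-Ikehara type: since the coefficients $|\lambda_{\pi}(n_{1},n_{2})|^{2}$ are nonnegative, $D(s)$ converges absolutely for $\Re(s) > 1$, and extends meromorphically to a half-plane $\Re(s) > 1 - \delta$ with only a simple pole at $s=1$ together with polynomial growth in vertical strips, one obtains directly $A(x) \sim c \cdot x$ for some constant $c > 0$, which is strictly stronger than the claim.

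The main obstacle is verifying the Dirichlet series identity $D(s) = L(s, \pi \times \tilde{\pi})/\zeta(3s)$ at the Euler-factor level, which amounts to expressing $|\lambda_{\pi}(p^{a}, p^{b})|^{2}$ through the Satake parameters of $\pi$ at $p$ and comparing with the degree-nine local factor of the Rankin-Selberg $L$-function; this is a standard but bookkeeping-heavy calculation. The deep analytic inputs, namely the meromorphic continuation of $L(s, \pi \times \tilde{\pi})$ with a simple pole at $s=1$ and the convexity estimates in vertical strips, can be quoted as a black box from the classical Rankin-Selberg theory.
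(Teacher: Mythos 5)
Your proposal is correct: the identity $\sum_{n_1,n_2}|\lambda_{\pi}(n_1,n_2)|^2 (n_1^2 n_2)^{-s} = L(s,\pi\times\tilde{\pi})/\zeta(3s)$ (a Cauchy-identity computation with Schur polynomials, using $\overline{\lambda_{\pi}(n_1,n_2)}=\lambda_{\pi}(n_2,n_1)$), together with the meromorphic continuation of the Rankin--Selberg $L$-function with its simple pole at $s=1$ and a Perron or Tauberian argument, gives the stated bound and in fact an asymptotic $\sim cx$. This is exactly the standard Rankin--Selberg argument that the paper is invoking when it quotes the lemma from Goldfeld's book without proof, so your route coincides with the intended one.
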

\begin{proof}
	see \cite{gold}.
\end{proof}

\subsection{Bessel function} Let $k \geq 2$ be a fixed integer. Let $J_{k-1}(x)$ be the Bessel function of the first kind of order $k-1$. We have the following lemma. 
\begin{lemma} \label{bessel function decompo}
Let $J_{k-1}(x)$ be the Bessel function of first kind. Then for $x$ large enough and $k$ fixed, we have
\begin{equation*}
J_{k-1}(2 \pi x) = x^{-1/2} \left( e (x) U(x) + e(-x) \overline{U}(x)\right), 
\end{equation*} where $U(x)$ is an smooth function defined on $(0, \infty)$ and satisfying $x^{j} U^{(j)}(x) \ll 1$. 
\end{lemma}

\subsection{Stationary phase analysis for exponential integrals}
 This subsection is taken from\cite[Section 8]{Blomer}. In the course of proof of our theorems, we will face  exponential integrals of the form 
$$I = \int w(t) \, e^{ih(t)} \mathrm{d}t,$$
where $w$ is a smooth function supported  in $[a,b]$ and $h$ is a smooth real valued function on $[a,b]$. 
The following lemma will be used to show that  $I$ is negligibly small in the absence of the stationary phase. 
\begin{lemma}
Let $Y \geq 1, X, Q, U, R >0$. And let us further assume that
\begin{itemize}
	\item $w^{(j)}(t) \ll_{j} \frac{X}{U^{j}}$ for $j=0,1,2, \ldots$
	\item $\vert h^{\prime}(t) \vert \geq R$ and $h^{(j)}(t) \ll_{j} \frac{Y}{Q^{j}}$ for $j=2,3,\ldots$
\end{itemize}
Then we have
$$I \ll_{A} (b-a)X \left(\left(\frac{QR}{\sqrt{Y}}\right)^{-A}+\left(RU \right)^{-A} \right).$$
\end{lemma}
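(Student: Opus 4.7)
The plan is the classical non-stationary phase argument via iterated integration by parts. Since $|h'(t)| \geq R > 0$ on the support of $w$, the reciprocal $1/h'$ is smooth there, and because $w$ has compact support in $[a,b]$, a single integration by parts gives
\[ I = \int w(t) e^{ih(t)} \, dt = -\int \frac{d}{dt}\!\left(\frac{w(t)}{ih'(t)}\right) e^{ih(t)} \, dt. \]
Introducing the operator $\mathcal{D}[f] := -\frac{d}{dt}\bigl(f/(ih')\bigr)$, we get $I = \int \mathcal{D}[w](t)\,e^{ih(t)}\,dt$, and iterating $A$ times yields $I = \int \mathcal{D}^A[w](t)\,e^{ih(t)}\,dt$.

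The heart of the proof is the pointwise bound
\[ |\mathcal{D}^A[w](t)| \ll_A X \left[(RU)^{-A} + (QR/\sqrt{Y})^{-A}\right] \qquad (t \in [a,b]), \qquad (\star) \]
which, combined with the trivial estimate $|I| \leq (b-a) \sup_t |\mathcal{D}^A[w](t)|$, immediately yields the lemma. To prove $(\star)$, I would argue by induction on $A$. A direct computation
\[ \mathcal{D}[f] = -\frac{f'}{ih'} + \frac{f h''}{i(h')^2} \]
shows that each application of $\mathcal{D}$ produces either a factor of size $1/(RU)$ (if the derivative hits the numerator) or a factor $h''/(h')^2$ of size $Y/(Q^2 R^2) = (\sqrt{Y}/(QR))^2$ (if it hits $1/h'$). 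By induction, $\mathcal{D}^A[w]$ is a finite sum of $O_A(1)$ terms of the schematic form $w^{(s)} \prod_\ell h^{(k_\ell)}/(h')^{A+r}$, where $s + \sum_\ell (k_\ell - 1) = A$ and $r$ is the number of $h^{(k_\ell)}$-factors. Using $|w^{(s)}| \ll X/U^s$, $|h^{(k)}| \ll Y/Q^k$ for $k \geq 2$, and $|h'| \geq R$, each such term is bounded by $X \cdot (RU)^{-s}(\sqrt{Y}/(QR))^{A-s}$, and the elementary inequality $a^s b^{A-s} \leq a^A + b^A$ for $a,b \geq 0$ completes the proof of $(\star)$.

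The main obstacle is the combinatorial bookkeeping of the Fa\`a di Bruno / product-rule expansion of $\mathcal{D}^A[w]$: one needs to verify that repeated differentiation of $1/h'$ only produces products of higher derivatives $h^{(k)}$ whose total contribution is dominated by the balanced factor $(\sqrt{Y}/(QR))^m$, where $m$ is the effective number of differentiations of $1/h'$ (the seeming discrepancy between $Y/Q^2$ and $\sqrt{Y}/Q$ being absorbed because each such differentiation contributes the square of the desired gain). This is the content of the standard lemma from \cite[Section 8]{Blomer}, and once the shape of the terms at each stage is identified, the induction is routine; the base case $A = 0$ is simply $|w| \leq X$.
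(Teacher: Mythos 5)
The paper does not prove this lemma; it is quoted verbatim from Blomer--Khan--Young \cite[Section 8]{Blomer} and used as a black box. Your proof is the standard one (iterated integration by parts applied to the operator $\mathcal{D}[f]=-\frac{d}{dt}(f/(ih'))$), and it is the same argument that appears in the cited source, so in that sense there is nothing to compare against inside the paper itself.

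The argument is essentially correct, but one step as written is not literally true and needs a sentence of preparation. You claim that each monomial $w^{(s)}\prod_{\ell}h^{(k_\ell)}/(h')^{A+r}$ is bounded by $X(RU)^{-s}(\sqrt{Y}/(QR))^{A-s}$. Writing $m=A-s$ and using $\sum_\ell k_\ell=m+r$, the actual bound on the monomial is $X(RU)^{-s}\cdot Y^{r}/(QR)^{m+r}$, and to dominate this by $X(RU)^{-s}\cdot Y^{m/2}/(QR)^m$ for all admissible $0\le r\le m$ (note $r\le m$ because each $k_\ell\ge 2$) one needs $QR\ge\sqrt Y$; taking $r=m$ shows this is sharp. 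So you should first observe that the conclusion is trivial unless $QR/\sqrt{Y}\ge 1$ and $RU\ge 1$ (else the right-hand side exceeds the trivial bound $(b-a)X$ coming from $|w|\le X$), and then run the monomial estimate under that standing hypothesis. This is exactly the reduction made in \cite{Blomer}. With that caveat inserted, the induction on the shape of $\mathcal{D}^A[w]$, the constraint $s+\sum_\ell(k_\ell-1)=A$, and the final inequality $a^sb^{A-s}\le a^A+b^A$ all go through as you describe.
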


The following lemma gives an asymptotic expression for $I$ when the stationary phase exist.
\begin{lemma} \label{stationaryphase}
Let $0 < \delta < 1/10, X, Y, U, Q>0, Z:= Q+X+Y+b-a+1$, and assume that
$$ Y \geq Z^{3 \delta}, \, b-a \geq U \geq \frac{Q Z^{\frac{\delta}{2}}}{\sqrt{Y}}.$$
Assume that $w$  satisfies  
$$w^{(j)}(t) \ll_{j} \frac{X}{U^{j}} \, \, \, \text{for} \,\,  j=0,1,2,\ldots.$$  
Suppose that there exists unique $t_{0} \in [a,b]$ such that $h^{\prime}(t_{0})=0$, and the function $h$ satisfies
$$h^{\prime \prime}(t) \gg \frac{Y}{Q^2}, \, \, h^{(j)}(t) \ll_{j} \frac{Y}{Q^{j}} \, \, \, \, \text{for} \, \, j=1,2,3,\ldots.$$
Then we have
$$I = \frac{e^{i h(t_{0})}}{\sqrt{h^{\prime \prime}(t_{0})}} \, \sum_{n=0}^{3 \delta^{-1}A} p_{n}(t_{0}) + O_{A,\delta}\left( Z^{-A}\right), \, p_{n}(t_{0}) = \frac{\sqrt{2 \pi} e^{\pi i/4}}{n!} \left(\frac{i}{2 h^{\prime \prime}(t_{0})}\right)^{n} G^{(2n)}(t_{0})$$
where 
$$ G(t)=w(t) e^{i H(t)}, \text{and} \, H(t)= h(t)-h(t_{0})-\frac{1}{2} h^{\prime \prime}(t_{0})(t-t_{0})^2.$$
Furthermore, each  $p_{n}$ is a rational function in $h^{\prime}, h^{\prime \prime}, \ldots,$ satisfying the derivative bound
$$\frac{d^{j}}{dt_{0}^{j}} p_{n}(t_{0}) \ll_{j,n} X \left(\frac{1}{U^{j}}+ \frac{1}{Q^{j}}\right) \left( \left(\frac{U^2 Y}{Q^2}\right)^{-n} + Y^{-\frac{n}{3}}\right).$$
\end{lemma}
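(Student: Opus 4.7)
The plan is to derive the stationary phase expansion by combining the preceding non-stationary phase lemma (to cut off the tails) with an explicit Gaussian Fourier inversion applied near $t_0$. Write $\eta = QZ^{\delta/2}/\sqrt{Y}$ and pick a smooth cutoff $\chi(t)$ which equals $1$ on $[t_0 - \eta Z^{\delta}, t_0 + \eta Z^{\delta}]$ and vanishes outside $[t_0 - 2\eta Z^{\delta}, t_0 + 2\eta Z^{\delta}]$, with $\chi^{(j)} \ll (\eta Z^{\delta})^{-j}$. Split $I = I_1 + I_2$ accordingly. On the support of $1-\chi$ the lower bound $|h'(t)| \gg (Y/Q^2)|t-t_0| \gg \sqrt{Y}\,Z^{3\delta/2}/Q$ holds by $h'' \gg Y/Q^{2}$ and the uniqueness of the stationary point; together with $U \geq \eta$ this lets the preceding non-stationary lemma give $I_2 \ll_A Z^{-A}$.

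For the main term $I_1$, perform the exact decomposition $h(t) = h(t_0) + \tfrac{1}{2} h''(t_0)(t-t_0)^{2} + H(t)$, so that
\begin{equation*}
I_1 = e^{ih(t_0)} \int G(t)\,\chi(t)\, e^{\frac{i}{2} h''(t_0)(t-t_0)^{2}} \mathrm{d}t,
\end{equation*}
with $G(t) = w(t)e^{iH(t)}$. Recognizing the exponent as a chirp, apply the Fourier-theoretic identity: writing $\widehat{g}(\xi) = \int g(t) e^{-2\pi i \xi t}\,\mathrm{d}t$ and using $\int e^{\frac{i}{2} h''(t_0) s^{2}} e^{-2\pi i \xi s}\,\mathrm{d}s = \sqrt{2\pi/|h''(t_0)|}\, e^{\pi i/4}\, e^{-2 \pi^{2} i \xi^{2}/h''(t_0)}$, Parseval gives $I_1$ as $e^{ih(t_0)}/\sqrt{h''(t_0)}$ times a constant times the integral of $\widehat{G\chi}(\xi)$ against a Gaussian in $\xi$. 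Expand this Gaussian into its Taylor polynomial of degree $3\delta^{-1}A$ and convert back to $t$-space: each monomial $\xi^{2n}$ pulls out a factor $G^{(2n)}(t_0)$ (up to $\chi$-corrections that are negligible since $\chi$ is identically $1$ on a large window), producing the stated series $\sum p_n(t_0)$. The truncation error is $O(Z^{-A})$ provided the terms drop off geometrically, which is where the hypothesis $Y \geq Z^{3\delta}$ is used to guarantee that each successive term gains a factor $\ll Z^{-\delta}$.

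Finally, the derivative bounds on $p_n(t_0)$ follow by differentiating $G = w e^{iH}$ via Faà di Bruno and tracking two competing scales: derivatives falling on $w$ cost $1/U$ while each derivative on $H$ (starting from $H''' = h'''$, etc.) costs $1/Q$ and brings weight $\sqrt{Y}/Q$ through $h^{(j)} \ll Y/Q^j$ combined with the cancellation $H''(t_0)=0$. Counting carefully, $G^{(2n)}(t_0) \ll X ((U^{2} Y/Q^{2})^{-n} + Y^{-n/3})$, which, after dividing by $(h''(t_0))^n$ (size $Y/Q^2$) inside $p_n$ and differentiating $j$ additional times in $t_0$, gives the claimed bound.

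The main obstacle is the bookkeeping in the last paragraph: organizing the Faà di Bruno sum for $G^{(2n)}$ so that the two regimes (derivatives on $w$ vs.\ on $e^{iH}$) combine into the clean bound $(U^{2}Y/Q^{2})^{-n} + Y^{-n/3}$. The hypothesis $U \geq QZ^{\delta/2}/\sqrt{Y}$ is exactly what makes both regimes decay in $Z^{\delta}$, and verifying this decay term-by-term—together with uniformity in the auxiliary $j$ derivatives in $t_0$—is the one computation where all the stated size assumptions must be used simultaneously.
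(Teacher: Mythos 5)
The paper does not actually prove this lemma: it is quoted from Blomer--Khan--Young \cite{Blomer} (their Proposition 8.2), so there is no internal proof to compare against. Your proposal reconstructs essentially the argument behind that cited result: truncation away from $t_0$ using the preceding non-stationary-phase lemma (your lower bound $|h'(t)|\gg (Y/Q^2)|t-t_0|\gg \sqrt{Y}Z^{3\delta/2}/Q$ on the support of $1-\chi$, together with $RU\gg Z^{2\delta}$, is the right input), then Parseval against the quadratic chirp $e^{\frac{i}{2}h''(t_0)(t-t_0)^2}$, Taylor expansion of the resulting Gaussian factor in $\xi$ to produce the terms $\frac{1}{n!}\bigl(i/2h''(t_0)\bigr)^n G^{(2n)}(t_0)$ (note $(G\chi)^{(2n)}(t_0)=G^{(2n)}(t_0)$ exactly, since $\chi\equiv 1$ near $t_0$), and Fa\`a di Bruno bookkeeping for the bounds on $p_n$; this is in essence H\"ormander's Theorem 7.7.5 made quantitative, which is how the source proceeds. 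One bookkeeping slip should be fixed: the estimate you assert for $G^{(2n)}(t_0)$, namely $X\bigl((U^2Y/Q^2)^{-n}+Y^{-n/3}\bigr)$, is already the bound for $p_n(t_0)$, i.e.\ it already incorporates the factor $(h''(t_0))^{-n}\asymp (Q^2/Y)^n$; dividing by $(h'')^n$ again, as your last paragraph says, would not give the stated bound. The correct chain is $G^{(2n)}(t_0)\ll X\bigl(U^{-2n}+(Y/Q^2)^n Y^{-n/3}\bigr)$, the second term coming from partitions of the $2n$ derivatives into blocks falling on $e^{iH}$ with $H^{(j)}(t_0)\ll Y/Q^j$, $j\ge 3$, the worst case being all blocks of size $3$ since $(Y/Q^3)^{2n/3}=(Y/Q^2)^nY^{-n/3}$; only after dividing by $(h''(t_0))^n$ does one obtain the claimed bound for $p_n$. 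With that corrected, and with the routine verification that the $\xi$-space Taylor truncation error after $3\delta^{-1}A$ terms is $O_{A,\delta}(Z^{-A})$ (each term gains $Z^{-\delta}$ because $U^2Y/Q^2\ge Z^{\delta}$ and $Y^{1/3}\ge Z^{\delta}$), your outline is sound and consistent with the proof in the cited reference.
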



\section{\bf Application of delta method and summation formulae} \label{aplicircle} 
Let $\pi$   be as defined in  Theorem \ref*{Thm} with $ \lambda_{\pi}(r,n) $ as its Fourier coefficients.
 Let
 \begin{equation} \label{s(x)value}
 S_r(X) = \sum_{n=1}^{\infty} \lambda_{\pi}(r,n) \, e\left(\alpha n^{\beta}\right) V\left(\frac{n}{X}\right).
 \end{equation}
There are two oscillatory terms in the above sum. We will use the delta method to separate these oscillations. Moreover, we will introduce an extra $v$-integral. More precisely, we rewrite the sum $ S_r(X)$ in \eqref{s(x)value} as
  \begin{equation} \label{v-integral}
  S_r(X) = \frac{1}{K} \int_{\mathbb{R}} V\left(\frac{v}{K}\right)\mathop{\sum \sum}_{\substack{n, m=1 \\ n = m } }^{\infty} \lambda_{\pi}(r,n) \, e\left(\alpha m^{\beta}\right) \delta(n-m) \left(\frac{n}{m}\right)^{iv} V\left(\frac{n}{X}\right) U\left(\frac{m}{X}\right)\mathrm{d}v,
  \end{equation}
  where $K=X^{\beta -\eta} < X^{\beta}$ is a parameter which will be chosen later optimally, and $U(x)$ is a smooth function supported in $[1/2,5/2]$, with $U(x)=1$ for $x \in [1,2]$, and $U^{(j)}(x) \ll_{j}1$. The $v$-integral in \eqref{v-integral} serves as a ``conductor lowering mechanism" which is introduced by Munshi in his ground breaking work on  subconvexity bounds for $GL(3)$ $L$-functions in t-aspect \cite{munshi1}.
  
  By repeated integration by parts we see that the $v$-integral in \eqref{v-integral} is negligibly small unless
  $$|n-m| \ll \frac{X}{K} X^{\epsilon}.$$ 
  Therefore we apply the formula for $\delta(n-m)$ given in Lemma \ref{deltasymbol} with $Q = X^{\epsilon}\sqrt{X/K} $. Hence we get
  \begin{align}
  \label{applicacircle}
  S_r(X) &=\frac{1}{QK}\int_{\mathbb{R}}W(x)\int_{\mathbb R}V\left(\frac{v}{K}\right)\sum_{1\leq q\leq Q}\;\frac{g(q,x)}{q}\;\sideset{}{^\star}\sum_{a\bmod{q}} \\
  \nonumber &\times \sum_{m=1}^\infty m^{-iv} e\left(\alpha m^{\beta} \right)e\left(-\frac{am}{q}\right)e\left(-\frac{mx}{qQ}\right)U\left(\frac{m}{X}\right)\\
  \nonumber &\times \mathop{\sum}_{n=1}^\infty \lambda_\pi(r,n)e\left(\frac{an}{q}\right)e\left(\frac{nx}{qQ}\right) n^{iv} V\left(\frac{n}{X}\right) \mathrm{d}v\mathrm{d}x  + O(X^{-2019}).
  \end{align}
  
 \subsection{Poisson summation formula} 
 We now consider the $m$ sum in  \eqref{applicacircle} and proceed to apply the Poisson summation formula. Thus we have
 \begin{align} \label{l-sumpoisson}
 & \sum_{m=1}^\infty m^{-iv} e\left(\alpha m^{\beta} \right)e\left(-\frac{am}{q}\right)e\left(-\frac{mx}{qQ}\right)U\left(\frac{m}{X}\right) \\
 \nonumber = & \sum_{r \, \rm mod \, q} e\left(-\frac{ar}{q}\right) \sum_{l \in \mathbb{Z}} (r+lq)^{-iv} e\left(\alpha(r+lq)^{\beta}\right) e\left(-\frac{(r+lq)x}{qQ}\right) U\left(\frac{r+lq}{X}\right).
 \end{align}
 On applying  the Poisson summation formula to the above $l$-sum, $m$-sum transforms as
 \begin{eqnarray} \label{m-sumafterpoisson}
 X^{1-iv} \sum_{\substack{m \in \mathbb{Z} \\ m \, \equiv a \, \textrm{mod} \, q}} \int_{0}^{\infty} U(y) \, y^{-iv} \, e \left(\alpha \left(Xy\right)^{\beta}- \frac{Xyx}{qQ} - \frac{Xmy}{q}\right) \mathrm{d}y.
 \end{eqnarray}

By repeated integration by parts, we see that the above $y$-integral is negligibly small if  
\begin{equation*} 
\left|m\right| \gg  \max \left\{\frac{qK}{X}, \alpha \beta qX^{\beta-1}, \frac{|x|}{Q} \right\} X^{\epsilon}.
\end{equation*}
Thus the effective range of $m$ is given by 
\begin{equation} \label{m-bound}
|m| \ll  \alpha \beta q X^{\beta -1} X^{\epsilon}= : M_0.
\end{equation}
We summarize the above discussion in the following lemma. 
\begin{lemma}\label{dual m}
Let $M_0$ be as in \eqref{m-bound}. Then we have 
	\begin{align*}
	 \sum_{m=1}^\infty m^{-iv} e\left(\alpha m^{\beta} \right)e\left(-\frac{am}{q}\right)e\left(-\frac{mx}{qQ}\right)&U\left(\frac{m}{X}\right) = X^{1-iv} \sum_{\substack{\left|m\right| \ll M_{0} \\ m \, \equiv a \, \textrm{mod} \, q}} \int_{0}^{\infty} U(y) \, y^{-iv} \\
	 &\times  e \left(\alpha \left(Xy\right)^{\beta}- \frac{Xyx}{qQ} - \frac{Xmy}{q}\right) \mathrm{d}y.
	\end{align*}
\end{lemma}


\subsection{$GL(3)$-Voronoi summation formula} \label{GL3 voro}
As a next step, we consider the $n$-sum in \eqref{applicacircle} and apply  $GL(3)$-Voronoi summation formula. In our setup  $\psi(n)= n^{iv} \, e\left(\frac{nx}{qQ}\right)  V\left(\frac{n}{X}\right)$. Thus on applying Lemma \ref{gl3voronoi} to the $n$-sum we arrive at
 \begin{align} \label{aplicagl3voro}
 & \mathop{\sum}_{n=1}^\infty \lambda_\pi(r,n)e\left(\frac{an}{q}\right)e\left(\frac{nx}{qQ}\right) n^{iv} V\left(\frac{n}{X}\right) \\
 \nonumber & = q \frac{\pi^{-5/2}}{4i} \sum_{\pm} \sum_{n_{1}|qr} \sum_{n_{2}=1}^{\infty}  \frac{\lambda_{\pi}(n_{2},n_{1})}{n_{1} n_{2}} S\left( r\bar{a}, \pm n_{2}; qr/n_{1}\right) \, \Psi_{0,1}^{\pm} \left(\frac{n_{1}^2 n_{2}}{q^3r}\right),
 \end{align}
 where the integral transform $\Psi_{0,1}^{\pm}(x)$ is as given in \eqref{psicombina}.  For further analysis, we only consider the case when the integral transform is $\Psi_{0}(x)$ and we take  + sign in the summation on the right hand side of \eqref{aplicagl3voro}, as the other cases can be dealt similarly. Furthermore, we can also assume that $ n_{1}^2 n_{2} X /q^3r \gg X^{\epsilon}$, since in the complimentary range, we get the desirable bound, namely, 
 \begin{equation} \label{comlimentarybound}
 S_{\text{compli}}(X) \ll X^{\frac{3}{4}-\frac{3 \beta}{4}+\frac{7 \eta}{4}}.
 \end{equation}
It can be seen as follows: by taking the $v$-integral into the explicit expression of $\Psi_{0}(x)$ we get a restriction on the $z$-variable ($|z-y| \ll \frac{1}{K} X^{\epsilon}$). For the integral over the  vertical line,  we use Stirling approximation formula for the Gamma function. 

In the case  $n_{1}^2 n_{2} X/q^3r \gg X^{\epsilon}$, we use the asymptotic behaviour of the function $\Psi_{0}(n_{1}^2n_{2}/q^3r)$ given in Lemma \eqref{GL3oscilation}. Taking $J$  large enough,  we can ignore the error term in \eqref{GL3oscilation}, and hence we only consider the leading term as other terms can be dealt similarly. 

Keeping the above discussion in mind, on applying Lemma \eqref{GL3oscilation} to \eqref{aplicagl3voro}, we see that the $n$-sum is given by
\begin{align} 
\frac{X^{2/3+iv}}{qr^{2/3}} \sum_{n_1|qr}n_1^{1/3}&\sum_{n_2=1}^\infty \frac{\lambda_\pi(n_1,n_2)}{n_2^{1/3}}S(r\bar a,  n_2; qr/n_1)\\
\nonumber &\times \int_0^\infty V(z)z^{iv}e\left(\frac{Xxz}{qQ}\pm \frac{3(Xn_1^2n_2z)^{1/3}}{qr^{1/3}}\right)\mathrm{d}z.
\end{align}

By repeated integration by parts we see that the $z$-integral is negligibly small if
\begin{equation} \label{n-sumbound}
n_{1}^{2} n_{2} \gg  \max \left\{\frac{\left(qK\right)^{3}r}{X}, K^{3/2} X^{1/2} rx^{3} \right\} X^{\epsilon}=:N_0. 
\end{equation}
We end this subsection by recording the above arguments in the following lemma.
\begin{lemma}\label{first gl3 voronoi}
 Let $\psi(n)= n^{iv} \, e\left(\frac{nx}{qQ}\right)  V\left(\frac{n}{X}\right)$ and $N_0$ be as in \eqref{n-sumbound}. Then we have 
 \begin{align*}
  \mathop{\sum}_{n=1}^\infty \lambda_\pi(r,n)e\left(\frac{an}{q}\right)\psi(n)=&\frac{X^{2/3+iv}}{qr^{2/3}} \sum_{n_1|qr}n_1^{1/3}\sum_{n_2\ll N_0/n_1^2}\frac{\lambda_\pi(n_1,n_2)}{n_2^{1/3}}S(r\bar a,  n_2; qr/n_1)\\
  \nonumber &\times \int_0^\infty V(z)z^{iv}e\left(\frac{Xxz}{qQ}\pm \frac{3(Xn_1^2n_2z)^{1/3}}{qr^{1/3}}\right)\mathrm{d}z.
 \end{align*}
 \end{lemma}

\section{\bf Simplifying the integrals} \label{4 integral}
On applying Lemma \ref{dual m} and Lemma\ref{first gl3 voronoi} to \eqref{applicacircle}, we see that 
 $S_r(X)$, after executing the $a$-sum, is  given by
\begin{align} \label{essentialS(X)}
&  \frac{X^{5/3}}{QKr^{2/3}}\int_{\mathbb{R}}W(x)\int_{\mathbb R}V\left(\frac{v}{K}\right)\sum_{1\leq q\leq Q}\;\frac{g(q,x)}{q^2} \\
\nonumber &\times  \sum_{\substack{\left|m\right| \ll M_{0} \\ (m,q)=1}} \int_{0}^{\infty} U(y) \, y^{-iv} \, e \left(\alpha \left(Xy\right)^{\beta}- \frac{Xxy}{qQ} - \frac{Xmy}{q}\right) \mathrm{d}y \, \, \sum_{n_1|qr}n_1^{1/3}\\
\nonumber &\times  \sum_{n_2 \ll \frac{N_{0}}{n_{1}^2}} \frac{\lambda_\pi(n_1,n_2)}{n_2^{1/3}}S(r m,  n_2; qr/n_1) \int_0^\infty V(z)z^{iv}e\left(\frac{Xxz}{qQ}\pm \frac{3(Xn_1^2n_2z)^{1/3}}{qr^{1/3}}\right)\mathrm{d}z \mathrm{d}x  \mathrm{d}v.
\end{align}
In this section, we will  simplify the four-fold integrals in \ref{essentialS(X)}. Let us first consider the $x$-integral, which is given by
$$\int_{\mathbb R} W(x)\,g(q,x) \,e\left(\frac{Xx(z-y)}{qQ}\right) \, \mathrm{d}x.$$
Using the property of $g(q,x)$ given in Subsection \ref{circlemethod}, we see that the above integral splits as
$$\int_{\mathbb R} W(x) \,e\left(\frac{Xx(z-y)}{qQ}\right) \, \mathrm{d}x + \int_{\mathbb R} W(x)\,h(q,x) \,e\left(\frac{Xx(z-y)}{qQ}\right) \, \mathrm{d}x.$$
In the first integral, by repeated integration by parts we see that the integral is negligibly small unless $|z-y| \ll X^{\epsilon } q/QK $. In the second integral case, we get a weaker restriction $|z-y|\ll X^{\epsilon}/K$ by considering $v$-integral. But we have a better bound  for $h(q,x)$,   $h(q,x) \ll 1/qQ$. As a result, we get better bounds in this case. We will continue our analysis with the first integral.

Letting $z=y+u$ with $|u|\ll X^{\epsilon} q /QK$ in the $z$-integral in \eqref{essentialS(X)}, we see that the $y$-integral changes into
\begin{equation} \label{change-y-integral}
\mathcal{I} \left(m,n_{1}^{2}n_{2},q \right) := \int_{0}^{\infty} U_{v,u}(y) e\left( \alpha \left(Xy\right)^{\beta} - \frac{Xmy}{q}\pm \frac{3(Xn_1^2n_2\left(y+u\right))^{1/3}}{qr^{1/3}} \right)\mathrm{d}y,
\end{equation}
where $U_{v,u}(y)=U(y)V(y+u)(1+u/y)^{iv}$. We note that $U_{v,u}^{(j)}(y)\ll X^{\epsilon}$. From now onwards, by the abuse of notations, we will denote it by $U(y)$. Hence the  four-fold integral in \eqref{essentialS(X)} looks like
$$\int_{\mathbb{R}}W(x) \int_{\mathbb{R}}V\left(\frac{v}{K}\right)\int_{\mathbb{R}}e\left(\frac{Xxu}{qQ}\right)\mathcal{I} \left(m,n_{1}^{2}n_{2},q \right)\mathrm{d}u\mathrm{d}v\mathrm{d}x.$$
We will estimate $x$, $v$ and $u$-integral trivially later and we will see that estimates for $\mathcal{I} \left(m,n_{1}^{2}n_{2},q \right)$ are uniform with respect to $u$ and $v$. Hence we have reduced the four fold integral into 
$$K \times \frac{q}{QK}\times  \mathcal{I} \left(m,n_{1}^{2}n_{2},q \right).$$
\subsection{Estimates for the $y$-integral $\mathcal{I}$} 
In this subsection, we will estimate the $y$-integral $\mathcal{I} \left(m,n_{1}^{2}n_{2},q \right)$. More precisely, we have the following lemma.
\begin{lemma} \label{L^2bound}
Let $W$ be a smooth bump function supported in $[1,2]$ and $W^{(j)}\ll 1$. Then we have
$$\mathcal{W} := \int_{\mathbb{R}} W(w) \vert \mathcal{I}(m,N_{0}w^3,q)\vert^{2} \mathrm{d}w \ll \frac{X^{\epsilon}}{X^{\beta}}.$$
\end{lemma}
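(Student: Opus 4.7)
The plan is to expand the square and integrate in $w$ first, exploiting the linear $w$-dependence of the Voronoi contribution to the phase, then use a Taylor expansion and a first-derivative estimate on the residual $y$-integral. Opening $|\mathcal{I}(m, N_0 w^3, q)|^2$ as a double integral in $y_1, y_2 \in [1/2, 5/2]$, the phase $h(y_1;w) - h(y_2;w)$ splits as $\phi_0(y_1, y_2) + c(y_1, y_2)\,w$, where
$$\phi_0(y_1, y_2) := \alpha X^\beta (y_1^\beta - y_2^\beta) - \frac{Xm(y_1 - y_2)}{q}$$
collects the $w$-independent terms and $c(y_1, y_2) = \pm 3 X^{1/3} N_0^{1/3} q^{-1}\bigl[(y_1 + u)^{1/3} - (y_2 + u)^{1/3}\bigr]$ is the coefficient of $w$. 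Since $(y+u)^{1/3}$ has non-vanishing derivative for $y$ in the support of $U$ (and $|u| \ll X^\epsilon/K$ is negligible), $|c(y_1, y_2)| \asymp X^{1/3}N_0^{1/3}|y_1 - y_2|/q$. The $w$-integral produces $\widehat{W}(c)$, which is rapidly decaying, so the integration is restricted to
$$|y_1 - y_2| \ll \frac{q X^\epsilon}{X^{1/3}N_0^{1/3}}$$
up to a negligible error.

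Next I change variables $y_1 = y + s/2$, $y_2 = y - s/2$ with $|s| \ll qX^\epsilon/(X^{1/3}N_0^{1/3})$. Because $\phi_0(y+s/2, y-s/2)$ is odd in $s$, Taylor expansion gives
$$\phi_0(y+s/2, y-s/2) = s\,C(y) + O\bigl(X^\beta s^3\bigr), \qquad C(y) := \alpha\beta X^\beta y^{\beta-1} - \frac{Xm}{q}.$$
Using the lower bound $N_0 \geq (qK)^3/X$ from \eqref{n-sumbound} together with $K = X^{\beta-\eta}$ for some fixed small $\eta > 0$, the cubic remainder is of size $O(X^{-2\beta + 3\eta + 3\epsilon}) = o(1)$, so it may be discarded inside the exponential. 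The $s$-phase being then effectively linear, integration by parts in $s$ (with $|C(y)|$ as the lower bound on the derivative), combined with the trivial length bound, yields
$$\int ds \;\ldots\; \ll \min\Bigl(\frac{1}{|C(y)|},\; \frac{qX^\epsilon}{X^{1/3}N_0^{1/3}}\Bigr).$$

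Finally, $C'(y) = \alpha\beta(\beta-1)X^\beta y^{\beta-2} \asymp X^\beta$, so $C$ has at most one simple zero $y_m$ in $[1/2, 5/2]$ with $|C(y)| \asymp X^\beta|y - y_m|$ nearby. Splitting the $y$-integral at the crossover $|y - y_m| \asymp X^{1/3}N_0^{1/3}/(qX^\beta)$, the near region contributes at most $(\text{length}) \cdot (\text{trivial bound}) \ll X^\epsilon/X^\beta$, while the far region contributes $\int dy/(X^\beta|y-y_m|) \ll X^\epsilon/X^\beta$ by a logarithmic estimate; combining gives the claim. The technical heart of the argument is discarding the cubic remainder in the $s$-Taylor expansion: a direct pointwise bound on $\mathcal{I}$ via standard stationary phase is in fact forbidden because $h''(y)$ can degenerate through a sign-matched cancellation between the $y^\beta$-term and the Voronoi term, so the $L^2$-average over $w$ with the bump $W$ is essential, and the lower bound $N_0 \geq (qK)^3/X$ combined with $\eta > 0$ provides exactly the margin needed to kill this remainder.
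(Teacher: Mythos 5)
Your route is genuinely different from the paper's. The paper's proof is a dichotomy on the second derivative of the phase $f(y)$ of $\mathcal{I}$: it notes that
$$f''(y)=\alpha\beta(\beta-1)X^\beta y^{\beta-2}\mp\tfrac{2}{3}(XN_0)^{1/3}w\,q^{-1}y^{-5/3}+\cdots,$$
and that generically the two terms do not cancel, so $|f''(y)|\gg X^\beta$ and the pointwise second-derivative bound gives $\mathcal{I}\ll X^{-\beta/2}$, hence $\mathcal{W}\ll X^{-\beta}$ immediately; only in the degenerate regime (minus sign and $(XN_0)^{1/3}w/q\asymp X^\beta$) does it open the square, integrate in $w$, and then the restriction $|y_1-y_2|\ll qX^\epsilon/(XN_0)^{1/3}\asymp X^\epsilon/X^\beta$ already closes via the trivial bound. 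You instead always open the square, never exploit the two-term structure of $f''$, and control the residual oscillation in $s=y_1-y_2$ via a linearization and a first-derivative estimate. The scaffolding (the $\widehat W$-truncation $|s|\ll s_{\max}:=qX^\epsilon/(XN_0)^{1/3}$, the stationary set $y_m$ of $C(y)$, the crossover splitting) is sound, but there is a genuine gap in how you dispose of the nonlinearity.

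The step ``the cubic remainder is of size $O(X^{-2\beta+3\eta+3\epsilon})=o(1)$, so it may be discarded inside the exponential'' is not justified, and this is the technical heart of your argument. Writing $\phi_0(y+s/2,y-s/2)=sC(y)+R(y,s)$ with $|R|\ll X^\beta|s|^3$, replacing $e(\phi_0)$ by $e(sC)$ produces an error at most
$$\int_y\int_{|s|\ll s_{\max}}|e(R)-1|\,ds\,dy\ \ll\ X^\beta\int_{|s|\ll s_{\max}}|s|^3\,ds\ \ll\ X^\beta s_{\max}^4,$$
which, in the critical case $s_{\max}\asymp X^\epsilon/K=X^{\epsilon-\beta+\eta}$, is of size $X^{-3\beta+4\eta+4\epsilon}$. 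This is $\ll X^{\epsilon-\beta}$ only if $\eta\le\beta/2$. But the lemma must hold for the $\eta$ chosen at the end of the paper, namely $\eta=3\beta/5>\beta/2$, where the discarding error is $\asymp X^{-3\beta/5+O(\epsilon)}$, strictly larger than the target bound $X^{\epsilon-\beta}$; you cannot stipulate that $\eta$ is ``small'' here, since it is a free parameter to be optimized downstream. The correct version of your step keeps $R$ in the phase: $\partial_s\phi_0=C(y)+\partial_sR$ with $|\partial_sR|\ll X^\beta s^2\ll X^\beta s_{\max}^2$, and in your far region $|y-y_m|\gg(s_{\max}X^\beta)^{-1}$ one has $|C(y)|\gg 1/s_{\max}\gg X^\beta s_{\max}^2$ (the last inequality being exactly $X^\beta s_{\max}^3=o(1)$, i.e.\ $\eta<2\beta/3$), so the linear term controls $\partial_s\phi_0$ and integration by parts on the true phase still gives $\int ds\ll 1/|C(y)|$. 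That reasoning needs only $\eta<2\beta/3$, which $\eta=3\beta/5$ does satisfy, but it is a different justification from ``discard the $o(1)$ remainder.'' Finally, your closing claim that the pointwise stationary-phase bound on $\mathcal{I}$ is ``forbidden'' overstates the issue: the cancellation in $f''$ occurs only in the narrow window $(XN_0)^{1/3}w/q\asymp X^\beta$ with the unfavorable sign, and outside it the second-derivative bound works directly and more simply than your route — which is exactly why the paper's dichotomy is cleaner.
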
 
\begin{proof}
Consider the $y$-integral
$$\mathcal{I} \left(m,N_{0}w^3,q \right) = \int_{0}^{\infty} U(y) e\left( \alpha \left(Xy\right)^{\beta} - \frac{Xmy}{q}\pm \frac{3(XN_{0}\left(y+u\right))^{1/3} w}{qr^{1/3}} \right)\mathrm{d}y.$$
The phase function of the above integral is given by
$$f(y)=\alpha \left(Xy\right)^{\beta} - \frac{Xmy}{q}\pm \frac{3(XN_{0}\left(y+u\right))^{1/3} w}{qr^{1/3}}.$$
On computing the higher order derivatives, we see that 
$$f^{\prime \prime}(y)= \alpha \beta (\beta-1) X^{\beta} y^{\beta -2} \mp \frac{2}{3} \frac{(XN_{0})^{1/3}w}{q r^{1/3}y^{5/3}} + O\left(\frac{(XN_0)^{1/3}|u|}{qr^{1/3}} \right).$$
For this to be smaller then $X^{\beta}$ in magnitude one at least needs a negative sign in the second term and $(XN_{0})^{1/3}w/qr^{1/3} \asymp X^{\beta}$. Except this case,  by using second derivative bound, we get 
$$\mathcal{I} \left(m,N_{0}w^3,q \right) \ll \frac{1}{ \sqrt{X^{\beta}}}.$$
In  the special situation, i.e., when  $(XN_{0})^{1/3}w/qr^{1/3}\asymp X^{\beta}$ and there is a negative sign in the second term, opening the absolute square and interchanging the integration symbols, we see that 

\begin{align}
\mathcal{W} &\ll \int \int U(y_{1}) U(y_{2}) \left| \int W(w) e \left(\frac{3w(XN_{0})^{1/3}}{qr^{1/3}}\left((y_{1}+u)^{1/3}-(y_{2}+u)^{1/3}\right)\right) \, \mathrm{d}w\right| \nonumber\\
\nonumber& \ll \int \int _{\substack{|y_{1}-y_{2}|\ll \frac{X^{\epsilon}}{X^{\beta}}}} U(y_{1}) U(y_{2}) \, \mathrm{d}y_{1} \, \mathrm{d}y_{2} + O(X^{-2019})\ll \frac{1}{X^{\beta}}.
\end{align}
Hence the lemma follows.
\end{proof}
 
\section{\bf Cauchy-Schwarz and Poisson}
\subsection{Cauchy inequality}
 After simplifying the integrals, the expression  in \eqref{essentialS(X)} has essentially reduced to 
\begin{align} \label{setting-for-cauchy}
\frac{X^{2/3}}{r^{2/3}} & \sum_{1 \leq q \leq Q} \frac{1}{q}   \sum_{n_{1}|qr} n_{1}^{1/3} \sum_{n_{2} \ll N_{0}/n_{1}^{2}} \frac{\lambda_{\pi}(n_{1},n_{2})}{n_{2}^{1/3}} \\
\nonumber & \times \sum_{\substack{\left|m\right| \ll M_{0} \\ (m,q)=1}}  S(r m,  n_2; qr/n_1) \mathcal{I}\left(m,n_{1}^{2}n_{2},q \right),
\end{align}
where $\mathcal{I}\left(m,n_{1}^{2}n_{2},q \right)$ is as given in \eqref{change-y-integral}. 

Spliting the sum over $q$ in dyadic blocks $q\backsim C$ and  writing $q=q_1q_2$ with $q_1|(n_1r)^\infty$, $(n_1r,q_2)=1$, and apply the Cauchy-Schwarz inequality over $n_{2}$-sum.  We see that \eqref{setting-for-cauchy} is bounded by
\begin{equation} 
 X^{\epsilon} \sup_{C \leq Q} \, \frac{X^{2/3} N_{0}^{1/6}}{C} \sum_{\frac{n_1}{(n_1,r)} \ll C} n_1^{1/3}\Theta^{1/2}\sum_{\frac{n_1}{(n_1,r)}|q_1|(n_1r)^\infty}\Omega^{1/2},
\end{equation}
 where 
\begin{equation} \label{omegavalue1}
\Omega = \sum_{n_{2} \ll N_0/n_1^2} \Big|\sum_{q \sim C} \sum_{\substack{\left|m\right| \ll M_{0}\\ (m,q)=1}} S\left(r{m}, n_{2};qr/n_{1}\right) \, \mathcal{I}\left(m,n_{1}^{2}n_{2},q \right) \Big|^{2},
\end{equation}
and 
\begin{align}\label{theta}
\Theta=\sum_{n_2\ll N_0/n_1^2} \frac{|\lambda_\pi(n_1,n_2)|^2}{n_2^{2/3}}.
\end{align} 
\subsection{Poisson summation}\label{Poisson}
We now smooth out the outer sum in \ref{omegavalue1} with an appropriate bump function, say, $W$ to apply the Poisson summation formula.  Thus 
\begin{equation*} 
\Omega = \sum_{n_{2} \in \mathbb{Z}} W\left(\frac{n_{1}^{2}n_{2}}{N_{0}}\right) \Big|\sum_{q \sim C} \sum_{\substack{\left|m\right| \ll M_{0}\\ (m,q)=1}} S\left(r{m}, n_{2};qr/n_{1}\right) \, \mathcal{I}\left(m,n_{1}^{2}n_{2},q \right) \Big|^{2}.
\end{equation*}
Opening the absolute value square  we see that
\begin{equation} 
\Omega = \mathop{\sum \sum}_{q,q' \sim C} \mathop{\sum \sum}_ {\substack{\left|m\right|,\left|m'\right| \ll M_{0} \\ (m,q)=(m',q')=1 }} \mathcal{L} \left(m,m',q,q',n_{1}\right),
\end{equation}
where $\mathcal{L} \left(m,m',q,q',n_{1}\right)$ is given by
\begin{equation*} \label{L-value}
\sum_{n_{2}\in \mathbb{Z}} W\left(\frac{n_{1}^2 n_{2}}{N_{0}}\right) S\left(r{m},n_{2};qr/n_{1}\right) S\left(r{m'},n_{2};q'r/n_{1}\right) \mathcal{I} \left(m,n_{1}^2 n_{2},q\right) \overline{\mathcal{I} \left(m',n_{1}^2 n_{2},q'\right)}, 
\end{equation*}
and $q^{\prime}=q_1q_2^{\prime}$. Using the change of variable $n_2 \rightsquigarrow n_2q_1q_2q_2^\prime r/n_1+\gamma$ with $ 0 \leq \gamma < q_1q_2q_2^\prime r/n_1$ we arrive at
\begin{align}
\sum_{\gamma \, \rm mod \, q_1q_2q_2^{\prime}/n_1}&   S\left(r{m},\gamma;qr/n_{1}\right)  S\left(r{m^{\prime}},\gamma;q^{\prime}r/n_{1}\right)\sum_{n_2 \in \mathbb{Z}} \mathcal{I} \left(m,n_{1}^2 (\gamma+n_2q_1q_2q_2^\prime r/n_1),q\right) \notag \\
 & \times  \overline{\mathcal{I} \left(m^{\prime},n_{1}^2 (\gamma+n_2q_1q_2q_2^\prime r/n_1),q^{\prime}\right)} \, W\left(\frac{n_{1}^2 (\gamma+n_2q_1q_2q_2^\prime r/n_1)}{N_{0}}\right).
\end{align}
 On applying the Poisson summation formula to the $n_2$-sum we get that
\begin{equation} \label{final-l-value}
\mathcal{L} \left(m,m',q,q',n_{1}\right) = \frac{N_{0}}{ q_1q_2q_2^{\prime}rn_1} \sum_{n_{2} \in \mathbb{Z}} \mathcal{C} \, \mathcal{J}
\end{equation}
where $	\mathcal{C}:=\mathcal{C}(m,m',q,q',n_{1},n_{2})$ is given by
\begin{align}\label{C}
 	\mathcal{C}= \sum_{\gamma \bmod \frac{q_1q_2q_2^{\prime}r}{n_1}} S\left(r{m},\gamma ;qr/n_{1}\right) S\left(r{m^{\prime}},\gamma;q^{\prime}r/n_{1}\right) e\left(\frac{ \gamma n_1n_2}{ q_1q_2q_2^{\prime}r}\right)
\end{align}
and $\mathcal{J}=\mathcal{J}(m,m',q,q',n_{1},n_{2})$ is defined as
\begin{align}\label{J}
	\mathcal{J} = \int W\left(w\right) \mathcal{I} \left(m,N_{0}w,q\right) \; \overline{\mathcal{I} \left(m',N_{0}w,q'\right)} e\left(-\frac{N_0 n_2w}{q_1q_2q_2^\prime r n_1}\right) \;  \mathrm{d}w.
\end{align}
By repeated integration by parts we see that the integral $\mathcal{J}$ is negligibly small unless 
\begin{align}\label{n_2 dual}
	n_{2} \ll  \frac{\sqrt{XK} Cn_1r}{N_{0}q_1} X^{\epsilon}:= \tilde{N}.
\end{align}
We conclude this section by recording the above discussion in the following lemma.
\begin{lemma}\label{SX after poisson}
	  Let $\mathcal{C}$ and $\mathcal{J}$ be as in \eqref{C} and \eqref{J} respectively. Let $\tilde{N}$ be as in \eqref{n_2 dual}. Then we have 
	\begin{equation} \label{aftercauchy1}
S_r(X)\ll X^{\epsilon} \sup_{C \leq Q} \, \frac{X^{2/3} }{C} \sum_{\frac{n_1}{(n_1,r)} \ll C} n_1^{1/3}\Theta^{1/2}\sum_{\frac{n_1}{(n_1,r)}|q_1|(n_1r)^\infty}\Omega^{1/2},
	\end{equation}
	where 
	\begin{align}\label{omegavalue2}
		\Omega=\frac{N_{0}}{ q_1q_2q_2^{\prime}rn_1}\mathop{\sum \sum}_{q,q' \sim C} \mathop{\sum \sum}_ {\substack{\left|m\right|,\left|m'\right| \ll M_{0} \\ (m,q)=(m',q')=1 }} \sum_{n_{2} \ll \tilde{N}} \mathcal{C} \, \mathcal{J},
	\end{align}
	with 
	$$M_0 \ll |\alpha|\beta CX^{\beta -1}, \ \ \ \mathrm{and} \ \ \ N_0 \ll K^{3/2}X^{1/2}r.$$
\end{lemma}
\section{\bf Analysis of the character sum}
In this section, we will analyze the character sum $\mathcal{C}$ given in \eqref{C}. A similar character was treated by Munshi in \cite[Section 6]{munshi3}. We have the following lemma. 
\begin{lemma} \label{charsum}
Let $\mathcal{C}=\mathcal{C}(m,m',q,q',n_{1},n_{2})$ be as in \eqref{C}. Then we have 
$$\mathcal{C}(m,m',q,q',n_{1},n_{2}) \ll  \frac{q_1^2r^2q_2q_2^{\prime}}{n_1^2}(q_2,q_2^{\prime}, n_2).$$
If $n_{2}=0$, Then
\begin{align*}
\mathcal{C}=\frac{q_1q_2^2r}{n_1}\mathop{\sideset{}{^ \star}\sum_{\alpha \, {\rm mod} \, qr/n_1 }} e\left(\frac{ r n_1\alpha({m} -{m^{\prime}})}{ q_1q_2r}\right)
\end{align*}
\end{lemma}
\begin{proof}
Let's recall that
\begin{align*}
\mathcal{C} = \sum_{\gamma \bmod \frac{q_1q_2q_2^{\prime}r}{n_1}} S\left(r{m},\gamma ;qr/n_{1}\right) S\left(r{m^{\prime}},\gamma;q^{\prime}r/n_{1}\right) e\left(\frac{ \gamma n_1n_2}{ q_1q_2q_2^{\prime}r}\right).
\end{align*}
On expanding the Kloosterman sum and then executing the $\gamma$-sum we get that
\begin{align}\label{C1}
	\mathcal{C} =\frac{q_1q_2q_2^{\prime}r}{n_1}\mathop{\sideset{}{^ \star}\sum_{\substack{\alpha \, {\rm mod} \, qr/n_1 }} \  \sideset{}{^ \star}\sum_{\substack{\alpha^\prime \, {\rm mod} \, q^\prime r/n_1 }}}_{ \bar{\alpha}q_2^\prime -\bar{\alpha}^\prime q_2\equiv -n_2  \, {\rm mod} \, q_1q_2q_2^\prime r/n_1} e\left(\frac{ r n_1({m}\alpha q_2^{\prime}-{m^{\prime}}\alpha^{\prime}q_2)}{ q_1q_2q_2^{\prime}r}\right).
\end{align}
If $n_2=0$, then the above congruence condition implies that $q_2=q_2^{\prime}$ and $\alpha=\alpha^{\prime}$. And hence
\begin{align*}
\mathcal{C}=\frac{q_1q_2^2r}{n_1}\mathop{\sideset{}{^ \star}\sum_{\alpha \, {\rm mod} \, qr/n_1 }} e\left(\frac{ r n_1\alpha({m} -{m^{\prime}})}{ q_1q_2r}\right).
\end{align*}
Hence the second part of the lemma follows.
Now let $n_2 \neq 0$. We dominate $\mathcal{C}$ in \eqref{C1} by a product of $\mathcal{C}_1$ and $\mathcal{C}_2$, where 
\begin{align*}
	\mathcal{C}_1=\frac{q_1r}{n_1}\mathop{\sideset{}{^ \star}\sum_{\substack{\alpha \, {\rm mod} \, q_1r/n_1 }} \  \sideset{}{^ \star}\sum_{\substack{\alpha^\prime \, {\rm mod} \, q_1 r/n_1 }}}_{ \bar{\alpha}q_2^\prime -\bar{\alpha}^\prime q_2\equiv -n_2  \, {\rm mod} \, q_1 r/n_1}1,
\end{align*}
and 
\begin{align*}
		\mathcal{C}_2 =q_2q_2^{\prime}	\mathop{\sideset{}{^ \star}\sum_{\substack{\alpha \, {\rm mod} \, q_2 }} \  \sideset{}{^ \star}\sum_{\substack{\alpha^\prime \, {\rm mod} \, q_2^\prime }}}_{ \bar{\alpha}q_2^\prime -\bar{\alpha}^\prime q_2\equiv -n_2  \, {\rm mod} \, q_2q_2^\prime}1.
\end{align*}
In the first sum $\mathcal{C}_1$ the congruence condition determine $\alpha$ uniquely in terms of $\alpha^\prime$. Hence 
\begin{align}\label{C1 bound}
\mathcal{C}_1 \ll \frac{(q_1r)^2}{n_1^2}.
\end{align}
In the second sum $\mathcal{C}_2$, given any $\alpha \ {\rm mod}  \ q_2 $, we observe that $\alpha^{\prime}$ is  determined uniquely modulo $q_2^{\prime}$. Moreover, reducing the congruence modulo $q_2$, we observe that number of such $\alpha$'s is given by $(q_2, q_2^{\prime},n_2)$. And hence 
\begin{align}\label{C2 bound}
	\mathcal{C}_2 \ll q_2q_2^{\prime}(q_2,q_2^{\prime},n_2).
\end{align} 
Combining \eqref{C1 bound} and \eqref{C2 bound}, we get the lemma.

\end{proof} 
\section{\bf Analysis of  zero frequency}\label{zero freq}
With all the ingredients in hand, we will now  estimate $S_r(X)$ in  the present and coming sections. We start by considering $\Omega$ in Lemma \ref{SX after poisson}. Let $\Omega_{0}$  denotes the part  of $\Omega$ corresponding to $n_2=0$ and let $S_r^{0}(X)$ denotes the part of $S_r(X)$ corresponing to $\Omega_{0}$. We will prove the following lemma in this section.

 \begin{lemma}\label{S(zero)}
 We have 
 \begin{equation} \label{zerocontribution}
 S_r^{0}(X) \ll \sqrt{\alpha \beta}r^{7/6}X^{\frac{3 \beta}{4}+\frac{3}{4}-\frac{3 \eta}{4}}.
 \end{equation} 
\end{lemma}
\begin{proof}
For $n_{2}=0$, we have seen in  Lemma \ref{charsum} that    $q_2=q_2^{\prime}$. Also using Lemma \ref{charsum} and Lemma \ref{L^2bound} in  \eqref{final-l-value} we get 
$$\mathcal{L}(m,m',q,n_{1}) \ll \frac{N_{0}}{n_{1}^2 X^{\beta}} \, \Big| \mathop{\sideset{}{^ \star}\sum_{\alpha \, {\rm mod} \, qr/n_1 }} e\left(\frac{ r n_1\alpha({m} -{m^{\prime}})}{ q_1q_2r}\right) \Big|.$$

 By substituting the above bound  in \eqref{omegavalue2}, we see that $\Omega_{0}$ is  dominated by
\begin{align}
 \frac{N_{0}}{X^{\beta} n_{1}^2}&  \sum_{q_2 \sim C/q_1} \sum_{\substack{\left|m\right|\ll M_{0} \\ (m,q)=1}}  \frac{qr}{n_{1}}+ \frac{N_{0}}{X^{\beta}n_{1}^2} \sum_{q_2 \sim C/q_1} \mathop{\sum \sum}_{\substack{\left|m\right|,\left|m'\right| \ll M_{0} \\ m \neq m' \\ (m,q)=1 = (m',q)=1}}\Big| \mathop{\sideset{}{^ \star}\sum_{\alpha \, {\rm mod} \, qr/n_1 }} e\left(\frac{ r n_1\alpha({m} -{m^{\prime}})}{ q_1q_2r}\right) \Big| \notag \\
& \ll  \frac{N_{0} M_{0} C^{2}r}{X^{\beta} n_{1}^3q_1}+\frac{N_{0}}{X^{\beta}n_{1}^2} \sum_{q_2 \sim C/q_1} \mathop{\sum \sum}_{\substack{\left|m\right|,\left|m'\right| \ll M_{0} \\ m \neq m' \\ (m,q)=1 = (m',q)=1}}(r(m-m^{\prime}),qr/n_1) \notag \\
& \ll  \frac{N_{0} M_{0} C^{2}r}{X^{\beta} n_{1}^3q_1}+\frac{N_0M_0^2rC}{X^{\beta}n_1^2q_1}.
\end{align}

 Now  substituting the above bound in place of $\Omega$ in $S_r(X)$ in Lemma \ref{SX after poisson}, we get  that
 \begin{align*}
 	\mathcal{S}_{r}^{0}(X) \ll \sup_{C \leq Q} \, \frac{X^{2/3} }{C} \sum_{\frac{n_1}{(n_1,r)} \ll C} n_1^{1/3}\Theta^{1/2}\sum_{\frac{n_1}{(n_1,r)}|q_1|(n_1r)^\infty}\frac{\sqrt{N_0M_0Cr}}{X^{\beta/2}n_1q_1^{1/2}}\left(\sqrt{C}+\sqrt{M_0}\right),
 \end{align*}
 Executing the $q_1$-sum trivially, we get
  \begin{align*}
 \mathcal{S}_{r}^{0}(X) \ll \sup_{C \leq Q}  \frac{X^{2/3} }{C} \sum_{\frac{n_1}{(n_1,r)} \ll C} \frac{(n_1,r)^{1/2}}{n_1^{7/6}}\Theta^{1/2}\frac{\sqrt{N_0M_0Cr}}{X^{\beta/2}}\left(\sqrt{C}+\sqrt{M_0}\right),
 \end{align*}
 We evaluate $n_1$-sum, using the Cauchy's inequality  and the Ramanujan bound (see Lemma \ref{ramanujan}), as
 \begin{align}\label{theta bound}
 \sum_{n_1\ll Cr}\frac{(n_1,r)^{1/2}}{n_1^{7/6}}\Theta^{1/2} \ll \left[\sum_{n_1 \ll Cr}\frac{(n_1,r)}{n_1}\right]^{1/2}\left[\mathop{\sum \sum}_{n_{1}^{2} n_{2} \leq N_0} \frac{\vert \lambda_{\pi}(n_{1},n_{2})\vert ^{2} }{(n_1^2n_2)^{2/3}}\right]^{1/2}\ll \,N_0^{1/6}.
 \end{align}
 Thus,  using the fact  $C \gg M_0$, we have 
 \begin{align*}
 	 \mathcal{S}_{r}^{0}(X) \ll \sup_{C \leq Q} \frac{X^{2/3}N_0^{2/3} }{C} \frac{\sqrt{M_0Cr}}{X^{\beta/2}}\left(\sqrt{C}\right) \ll  \sup_{C \leq Q} X^{2/3-\beta/2}N_0^{2/3}(M_0r)^{1/2},
 \end{align*}
 Finally plugging in the bounds  $M_0 \ll \alpha \beta C X^{\beta -1} $, $N_0 \ll K^{3/2}X^{1/2}r$ and $C \leq Q=X^{(1-\beta+\eta)/2}$ gives us the lemma. 
%
%
\end{proof}
 
 \section{\bf Analysis of  non-zero frequencies}
 \subsection{Estimates for small  $q \sim C$}
In this section, we will consider the cases which are compliment to Section \ref{zero freq}. Let $\Omega_{\neq 0}$ denotes the part of $\Omega$ in \eqref{omegavalue2} which is compliment to $\Omega_0$ and let $S_r^{\neq 0}(X)$ denotes the part of $S_r(X)$ in \eqref{aftercauchy1} which corresponds to  $\Omega_{\neq 0}$. Firstly, we consider the case when   $C \ll  X^{\frac{1}{2}-\frac{\beta}{2}+ \epsilon}$. We have the following lemma. 
 \begin{lemma}\label{S(small)}
Let $S_r^{\neq 0}(X)$  be as above. Let $S_{r, small}^{\neq 0}(X)$ denotes the contribution of $C \ll  X^{\frac{1}{2}-\frac{\beta}{2}+ \epsilon}$ to $S_r^{\neq 0}(X)$. Then we have
 \begin{equation} \label{smallmodulus}
 S_{r, small}^{\neq 0}(X) \ll |\alpha|\beta r^{7/6}X^{\frac{3}{4}+ \frac{\beta}{4}-\frac{\eta}{2}}.
 \end{equation}
 \end{lemma}

\begin{proof}
 Recall from Lemma \ref{SX after poisson} that to bound $S_r^{\neq 0}(X)$, we have to estimate $\Omega_{\neq 0}$ first, which is given as 
\begin{align}\label{omega nonzero starting}
\Omega_{\neq 0} \ll \frac{N_{0}}{ q_1q_2q_2^{\prime}rn_1}\mathop{\sum \sum}_{q_2,q_2^{\prime} \sim C/q_1} \mathop{\sum \sum}_ {\substack{\left|m\right|,\left|m'\right| \ll M_{0} \\ (m,q)=(m',q')=1 }} \sum_{\substack{\left|n_{2}\right| \ll \tilde{N} \\ n_{2} \neq 0}} \mathcal{C} \, \mathcal{J} .
\end{align}
Using Lemma \ref{L^2bound} for $\mathcal{J}$ and  Lemma \ref{charsum} for $\mathcal{C}$, we infer that
\begin{align*}
	\Omega_{\neq 0}&\ll\frac{N_{0}q_1r}{ X^{\beta}n_1^3}\mathop{\sum \sum}_{q_2,q_2^{\prime} \sim C/q_!} \mathop{\sum \sum}_ {\substack{\left|m\right|,\left|m'\right| \ll M_{0} \\ (m,q)=(m',q')=1 }} \sum_{\substack{\left|n_{2}\right| \ll \tilde{N} \\ n_{2} \neq 0}}(q_2,q_2^{\prime}, n_2)\\
	& \ll \frac{N_{0}\tilde{N}q_1r}{ X^{\beta}n_1^3}\mathop{\sum \sum}_{q_2,q_2^{\prime} \sim C/q_1} \mathop{\sum \sum}_ {\substack{\left|m\right|,\left|m'\right| \ll M_{0} \\ (m,q)=(m',q')=1 }}1
\end{align*}
Executing the remaining sums trivially, we get that
\begin{align}\label{nonzero omega bound}
	\Omega_{\neq 0}\ll\frac{N_0\tilde{N}C^2M_0^2r}{X^\beta n_1^3q_1}\ll \frac{C^3M_0^2r^2(XK)^{1/2}}{X^{\beta}n_1^2q_1^2}. 
\end{align}
In the last inequality we have used $\tilde{N}$ bound given in \eqref{n_2 dual}.
Finally, plugging in the above bound in \eqref{aftercauchy1}, we get
\begin{equation} 
S_{r, small}^{\neq 0}(X)\ll \sup_{C \leq X^{(1-\beta)/2}} \, \frac{X^{2/3} }{C} \sum_{\frac{n_1}{(n_1,r)} \ll C} n_1^{1/3}\Theta^{1/2}\sum_{\frac{n_1}{(n_1,r)}|q_1|(n_1r)^\infty}\left(\frac{C^3M_0^2r^2(XK)^{1/2}}{X^{\beta}n_1^2q_1^2}\right)^{1/2},
\end{equation}
Note that 
\begin{align*}
	\sum_{\frac{n_1}{(n_1,r)} \ll C} n_1^{1/3}\Theta^{1/2}\sum_{\frac{n_1}{(n_1,r)}|q_1|(n_1r)^\infty}\frac{1}{q_1n_1}&\ll 	\sum_{\frac{n_1}{(n_1,r)} \ll C} \frac{(n_1,r)}{n_1^{5/3}}\Theta^{1/2}\\
	&\ll \sum_{\frac{n_1}{(n_1,r)} \ll C} \frac{(n_1,r)^{1/2}}{n_1^{7/6}}\Theta^{1/2}\ll N_0^{1/6}.
\end{align*}
Last inequality follows from \eqref{theta bound}. And Hence 
\begin{align} \label{S(small bound)}
S_{r, small}^{\neq 0}(X) &\ll \sup_{C \leq X^{(1-\beta)/2}} \, \frac{X^{2/3}N_0^{1/6} }{C} \left(\frac{C^3M_0^2r^2(XK)^{1/2}}{X^{\beta}}\right)^{1/2} \notag\\
&\ll \sup_{C \leq X^{(1-\beta)/2}}X^{11/12-\beta/2}N_0^{1/6}C^{1/2}M_0rK^{1/4}.
\end{align}
Finally using bounds for $M_0$, $N_0$ and $C$, we get the lemma.
%
\end{proof}

\subsection{Estimates for generic $q \sim C $}
It now remains to tackle the case when $q \sim C \gg  X^{(1-\beta)/2}$. In this case, we need a better bound for the integral $\mathcal{J}$ in \eqref{J}.
 We have the following lemma.
 
 \begin{lemma}\label{J better}
 For $C \gg 1/2-\beta/2$, we have
 $$\mathcal{J}=\mathcal{J}(m,m',q,q',n_{1},n_{2}) \ll  \frac{1}{X^{\beta}} \frac{C^{2/3}r^{1/3}n_1^{1/3}}{n_2^{1/3}q_1^{1/3}N_0^{1/3}}.$$
 \end{lemma}
 
 \begin{proof}
Let's recall from \eqref{J} that 
\begin{align*}
\mathcal{J} = \int W\left(w\right) \mathcal{I} \left(m,N_{0}w,q\right) \; \overline{\mathcal{I} \left(m',N_{0}w,q'\right)} e\left(-\frac{N_0 n_2w}{q_1q_2q_2^\prime r n_1}\right) \;  \mathrm{d}w,
\end{align*}
 where
 $$I \left(m,N_{0} w,q \right) = \int_{0}^{\infty} U(y) e\left( \alpha \left(Xy\right)^{\beta} - \frac{Xmy}{q}\pm \frac{3 \left(XN_{0}w\left(y+u\right)\right)^{1/3}}{qr^{1/3}} \right)\mathrm{d}y. $$
Let
 $$A= \frac{Xm}{q} \quad \text{and} \;  B= \frac{3 \left(X N_{0}w\right)^{1/3}}{qr^{1/3}}.$$
 Firstly, we consider  the term $e(\pm B(y+u)^{1/3})$ in $I(m,N_0w,q)$, which can be expanded as
 $$e(\pm B(y+u)^{1/3})=e(\pm By^{1/3})e(\pm Bu/3y^{-2/3}+...).$$ 
 We observe that the second term of the right hand side does not oscillate with respect to $y$. So we can insert it into the weight function. 

Thus the phase function in the exponential integral $I(...)$ is essentially given by
 $$f(y)= \alpha X^{\beta} y^{\beta} - A y \pm B y^{1/3}.$$
 Note that for $q \sim C \gg X^{\frac{1}{2}-\frac{\beta}{2}+ \epsilon}$, we have $B \ll X^{\beta - \frac{\eta}{2}}$ and  $X^{\beta} \gg A \gg X^{\frac{\beta+1}{2} -\frac{\eta}{2}} \gg X^{\beta - \eta /2}$ as $\beta <1$. Moreover, if $m$ is negative or $A \ll X^{\beta-\epsilon}$ then the $y$-integral is negligibly small. Hence from now onwards, we assume that $m >0$ and $A \asymp X^{\beta}$.
 Next, we will find the stationary point, say, $y_{\text{stat}}$ of the above phase function. In fact, it can be written as
 $$y_{\text{stat}} = y_{0}+y_{1}+y_2+ \ldots,$$
 where $y_{k}= h_{k}(X,A) \left(B/X^{\beta}\right)^{k}$ with $h_{k}(X,A) \ll 1$ for  $k \geq 0$.   Explicit calculation yields
 $$y_{0}= \left(\frac{\alpha \beta X^{\beta}}{A}\right)^{\frac{1}{1- \beta}}, \quad y_{1}= \frac{\mp B}{3 \alpha \beta \left(\beta -1\right) X^{\beta}} \left(\frac{\alpha \beta X^{\beta}}{A}\right)^{\frac{4-3 \beta}{3 \left(1-\beta\right)}} .$$
 Therefore, using stationary phase analysis Lemma \ref{stationaryphase}, the integral $I(...)$ is essentially given by
 $$\frac{1}{X^{\frac{\beta}{2}}} \;e^{2 \pi i g_{0} \left(X,A\right)} e \left( B g_{1}\left(X,A\right) +B^{2} g_{2}\left(X,A\right) +  O \left(\frac{B^{3}}{X^{2\beta}}\right) \right),$$
 where $g_0(X,A)\ll X^{\beta}$, $g_1(X,A)\ll 1$ and $g_2(X,A)\ll 1/X^{\beta}$. It follows that the integral $\mathcal{J}$ is given by
\begin{align*}
 \frac{1}{X^{\beta}} \int W(w) \;& e \left(\left(B g_{1}\left(X,A\right)-B' g_{1}\left(X,A'\right)\right) +\left(B^{2} g_{2}\left(X,A\right)-B'^{2} g_{2}\left(X,A'\right)\right)\right) \\
\nonumber & \times  e \left( -\frac{N_0 n_2w}{q_1q_2q_2^\prime r n_1} + O\left(\frac{B^3}{ X^{2 \beta}}\right)+O\left(\frac{B^{\prime 3}}{ X^{2 \beta}}\right)\right)  \mathrm{d}w.
\end{align*}
 We note that 
$$\frac{B^3}{ X^{2 \beta}} \asymp \frac{B^{\prime 3}}{ X^{2 \beta}}\asymp \frac{XN_0}{C^3rX^{2 \beta}}.$$
Since $n_{2} \neq 0 $, we get
$$\frac{n_{2}N_{0}w}{q_1q_2q_2^{\prime}rn_1} \gg \frac{N_{0}}{(n_1,r)C^{2}r} \gg\frac{N_{0}}{C^{2}r}\gg  \frac{XN_{0}}{C^{3}rX^{2 \beta}},$$
provided $\beta >1/3$.
Making the change of variable $y \rightarrow z^3$  and applying the third derivative bound for the exponential integral, we get
$$\mathcal{J} \ll  \frac{1}{X^{\beta}} \frac{C^{2/3}r^{1/3}n_1^{1/3}}{n_2^{1/3}q_1^{1/3}N_0^{1/3}}.$$
Hence the lemma holds.
 \end{proof}
Now we estimate $S_{r}(X)$ for $C \gg X^{\frac{1}{2}-\frac{\beta}{2}+\epsilon}$. We have the following lemma. 
\begin{lemma}\label{S generic bound}
	Let $S_r(X)$ be as in \eqref{aftercauchy1}. Let $S_{r, generic}^{\neq 0}(X)$ denotes the contribution of $C \gg  X^{\frac{1}{2}-\frac{\beta}{2}}$ and $n_2 \neq 0$ to $S_r(X)$. Then we have 
	$$S_{r, generic}^{\neq 0}(X) \ll   r^{7/6}X^{3/4+\beta/12+5\eta/12}.$$
\end{lemma}
\begin{proof}
	Proof will follow using the same steps as in Lemma \ref{S(small)}. In this case, we will use $C \ll Q$ and Lemma \ref{J better} to bound $\mathcal{J}$. Using Lemma \ref{J better} and Lemma \ref{charsum} in \eqref{omega nonzero starting}, we infer that
	\begin{align*}
	\Omega_{\neq 0}&\ll\frac{N_{0}q_1r}{ X^{\beta}n_1^3}\mathop{\sum \sum}_{q_2,q_2^{\prime} \sim C/q_!} \mathop{\sum \sum}_ {\substack{\left|m\right|,\left|m'\right| \ll M_{0} \\ (m,q)=(m',q')=1 }} \sum_{\substack{\left|n_{2}\right| \ll \tilde{N} \\ n_{2} \neq 0}}(q_2,q_2^{\prime}, n_2)  \frac{C^{2/3}r^{1/3}n_1^{1/3}}{n_2^{1/3}q_1^{1/3}N_0^{1/3}}\\
	& \ll \frac{N_{0}\tilde{N}^{2/3}q_1^{2/3}r^{4/3}}{ X^{\beta}n_1^{8/3}}\frac{C^2M_0^2}{q_1^2}\frac{C^{2/3}}{N_0^{1/3}} \ll \frac{(KX)^{1/3}(Cn_1r)^{2/3}}{q_1^{2/3}}\frac{q_1^{2/3}r^{4/3}}{ X^{\beta}n_1^{8/3}}\frac{C^{8/3}M_0^2}{q_1^2} \\
	& \ll \frac{r^2}{n_1^2q_1^2}C^{16/3}K^{1/3}X^{\beta-5/3}.
	\end{align*}
	Plugging in the above bound in \eqref{aftercauchy1}, we get 
		\begin{equation}
	S_{r, generic}^{\neq 0}(X)\ll X^{\epsilon} \sup_{C \leq Q} \, \frac{X^{2/3} }{C} \sum_{\frac{n_1}{(n_1,r)} \ll C} n_1^{1/3}\Theta^{1/2}\sum_{\frac{n_1}{(n_1,r)}|q_1|(n_1r)^\infty}\frac{r}{n_1q_1}C^{8/3}K^{1/6}X^{\beta/2-5/6},
	\end{equation}
	Executing the $q_1$-sum we get
		\begin{equation}
	S_{r, generic}^{\neq 0}(X)\ll X^{\epsilon} \sup_{C \leq Q} \, \frac{X^{2/3} }{C} \sum_{\frac{n_1}{(n_1,r)} \ll C} \frac{(n_1,r)}{n_1^{5/3}}\Theta^{1/2}C^{8/3}K^{1/6}X^{\beta/2-5/6},
	\end{equation}
	$n_1$-sum can be evaluated as follows:
\begin{align*}
\sum_{n_1\ll Cr}\frac{(n_1,r)}{n_1^{5/3}}\Theta^{1/2} \ll \sum_{n_1\ll Cr}\frac{(n_1,r)^{1/2}}{n_1^{1/2}}\frac{(n_1,r)^{1/2}}{n_1^{7/6}}\Theta^{1/2} \ll N_0^{1/6}.
\end{align*}
Hence we have 
\begin{align*}
	S_{r, generic}^{\neq 0}(X) \ll  \sup_{C \leq Q} \, \frac{X^{2/3} }{C}N_0^{1/6}C^{8/3}K^{1/6}X^{\beta/2-5/6} \ll r^{7/6}X^{3/4+\beta/12+5\eta/12}.
\end{align*}
\end{proof}

 \section{\bf Conclusion}
In this  section, we bring together all the estimates from Lemma \ref{S(zero)}, Lemma \ref{S(small)} and Lemma \ref{S generic bound}. Hence we get
\begin{align*}
	S_r(X)&\ll \sqrt{\alpha \beta}r^{7/6}X^{\frac{3 \beta}{4}+\frac{3}{4}-\frac{3 \eta}{4}}+ \alpha \beta r^{7/6}X^{\frac{3}{4}+ \frac{\beta}{4}-\frac{\eta}{2}}+ r^{7/6}X^{3/4+\beta/12+5\eta/12}. \\
	&\ll \alpha\sqrt{ \beta}r^{7/6}X^{\frac{3 \beta}{4}+\frac{3}{4}-\frac{3 \eta}{4}}+ \alpha \sqrt{\beta} r^{7/6}X^{\frac{3}{4}+ \frac{\beta}{4}-\frac{\eta}{2}}+\alpha \sqrt{\beta} r^{7/6} r^{7/6}X^{3/4+\beta/12+5\eta/12}.
\end{align*}
As $\beta > \eta $, it follows that first term dominates the second term. Hence we have 

\begin{align*}
	S_r(X)\ll  \alpha\sqrt{ \beta}r^{7/6}X^{\frac{3 \beta}{4}+\frac{3}{4}-\frac{3 \eta}{4}}+\alpha \sqrt{\beta} r^{7/6}X^{3/4+\beta/12+5\eta/12}.
\end{align*}
Upon equating the above terms, we see that the optimal choice of $\eta$ is given by 
$$\eta=4\beta/7.$$
With this choice we get 
$$S_r(X) \ll \alpha \sqrt{\beta}r^{7/6}X^{3/4+9\beta/28}.$$
Note that we have a non-trivial power saving bound if $1/3<\beta < 7/9$. Thus we get Theorem \ref{mainthe}.
%

\section{\bf Proof of Theorem \ref{gl3thm21} }
In this section we will prove Theorem \ref{gl3thm21}. We first recall its statement.
\paragraph{\bf Theorem 2}
  Let $\lambda_f (n)$ be the Fourier coefficients of a Maass form $f$ for $SL(2, \mathbb{Z}) $. Let $\lambda_\pi(r, n)$ be the Fourier-Whittaker coefficients of a Maass form $\pi$ for
$SL(3, \mathbb{Z})$.  Then for any real $\alpha$, $0 < \beta <1$ and integer $ r \geq 1$ we have
\begin{equation}\label{R(X)}
R(X) := \sum_{n=1}^{\infty} \lambda_{\pi}(r,n) \, \lambda_{f}(n) \, e\left(\alpha n^{\beta}\right) V\left(\frac{n}{X}\right) \ll_{\pi, f,\epsilon}(\alpha \beta)^{\frac{3}{2}} r^{\frac{7}{6}}X^{\frac{3}{4}+\frac{9\beta}{14}+\epsilon},
\end{equation}
where $V(x)$ is a smooth function supported in $[1,2]$  satisfying  $V^{(j)}(x) \ll_{j} 1$ and $\int V=1$.
\begin{proof}
	
As mentioned in the Introduction, proof of Theorem \ref{gl3thm21} follows by applying the same machinery as in Theorem \ref{Thm} in this setup. We will see that, this time, we will use $GL(2)$ Voronoi summation formula instead of the Poisson summation formula to dualize $m$-sum  and accordingly there will be corresponing changes in the character sum and the integral transform. Now we proceed towards the proof. 
\subsection{Application of delta method} As a first step, we separate oscillations involved in $R(X)$ in \eqref{R(X)}. Indeed, there are three oscillatory factors, namely, $ \lambda_{\pi}(r,n)$,  $ \lambda_{f}(n)$ and $e\left(\alpha n^{\beta}\right)$ in $R(X)$. We will separate them using delta method.  Also we introduce a $v$-integral to lower the conductor. Thus $R(X)$ can be rewritten  as
\begin{align*}
 R(X) = \frac{1}{K} \int_{\mathbb{R}} V\left(\frac{v}{K}\right)\mathop{\sum \sum}_{\substack{n, m=1 \\ n = m } }^{\infty} \lambda_{\pi}(r,n)\lambda_{f}(n) e\left(\alpha m^{\beta}\right) \left(\frac{n}{m}\right)^{iv} V\left(\frac{n}{X}\right) U\left(\frac{m}{X}\right)\mathrm{d}v,
\end{align*} 
 where $K=X^{\beta -\eta} < X^{\beta}$ is a parameter which will be chosen later optimally, and $U(x)$ is a smooth function supported in $[1/2,5/2]$, with $U(x)=1$ for $x \in [1,2]$, and $U^{(j)}(x) \ll_{j}1$.
 As mentioned in Section \ref{aplicircle}, $v$-integral gives us restrictions on $|n-m|$,
 $$|n-m| \ll \frac{X}{K} X^{\epsilon}.$$ 
On applying Lemma \ref{deltasymbol} with $Q = X^{\epsilon}\sqrt{X/K}$ to $R(X)$, we get
 \begin{align}\label{R(X) after delta}
  R(X) =&\frac{1}{QK}\int_{\mathbb{R}}W(x)\int_{\mathbb R}V\left(\frac{v}{K}\right)\sum_{1\leq q\leq Q}\;\frac{g(q,x)}{q}\;\sideset{}{^\star}\sum_{a\bmod{q}} \notag \\
  &\times \sum_{m=1}^\infty \lambda_f(m)m^{-iv} e\left(\alpha m^{\beta} \right)e\left(-\frac{am}{q}\right)e\left(-\frac{mx}{qQ}\right)U\left(\frac{m}{X}\right)\notag \\
 &\times \mathop{\sum}_{n=1}^\infty \lambda_\pi(r,n)e\left(\frac{an}{q}\right)e\left(\frac{nx}{qQ}\right) n^{iv} V\left(\frac{n}{X}\right) \mathrm{d}v\mathrm{d}x  + O(X^{-2020}).
 \end{align}
 \subsection{GL(2) Voronoi formula} Next, we apply $GL(2)$ Voronoi summation formula to the $m$-sum in $R(X)$ in \eqref{R(X) after delta}. On Applying Lemma \ref{gl2 voronoi} to the $m$-sum we obtain
\begin{align*}
&\sum_{m =1}^\infty \, \lambda_{f}(m) \, m^{-iv} e\left(\alpha m^{\beta}\right) e\left(\frac{-am}{q}\right) \,  e\left(\frac{-mx}{qQ}\right)  U\left( \frac{m}{X}\right) = \frac{2 \pi i^k}{q} \sum_{m =1}^\infty \, \lambda_{f}(m) \\
&   \hspace{1cm} \times e\left(\frac{m \overline{a}}{q}\right) \int_0^\infty U(y/X) y^{-iv}  e \left( \alpha  y^{\beta} \right) \left( \frac{- x y}{qQ}\right) J_{k-1} \left( \frac{4 \pi \sqrt{m y}}{q}\right) dy. 
\end{align*}
Using the change of variable, $y \rightsquigarrow yX$, and using  Lemma \ref{bessel function decompo} for the bessel function, we see that the $m$-sum is given by
\begin{align} \label{I1}
\frac{X^{3/4-iv}}{\sqrt{q}} \sum_{m =1}^\infty  \frac{\lambda_{f}(m)}{m^{1/4}}  e\left(\frac{m \overline{a}}{q}\right) \int_0^\infty U(y) y^{-iv} e\left(  \alpha X^{\beta} y^{\beta} - \frac{ X x y}{qQ}  \pm \frac{2 \sqrt{m Xy}}{q} \right) dy. 
\end{align} 
Note that we have made a slight abuse of notation, the weight function $U$ above is different from the one we started with. Let us denote the integral in above equation  by $I_1(m, q, x)$. By repeated integration by parts $j$-times we see that 
\begin{align*}
I_1(m, q, x) \ll \left( 1+ K + \alpha \beta X^\beta + \frac{Xx}{q Q} \right)^j \left( \frac{q}{ \sqrt{mX}}\right)^j \ll  \left( \alpha \beta X^\beta + \frac{X}{q Q} \right)^j \left( \frac{q}{ \sqrt{mX}}\right)^j. 
\end{align*}
We observe that integral $ I_1(m, q, x)$ is negligibly small unless
\begin{align} \label{second M 0}
m \ll X^\epsilon  \ \max \left(\alpha^2\beta^2 q^2 X^{2\beta-1}, \  K \right) := M_0. 
\end{align}
We end this subsection by recording the above discussion in the following lemma.
\begin{lemma}\label{dual m sum}
	Let $g(y)= y^{-iv} e\left(\alpha y^{\beta}\right)  e\left({-yx}/{qQ}\right)  U\left(y/X \right)$. Let $M_0$ be as in \eqref{second M 0}. Then we have
	\begin{align*}
		\sum_{m \ll M_0} \, \lambda_{f}(m)e\left(\frac{-am}{q}\right)g(m)=\frac{X^{3/4-iv}}{\sqrt{q}} \sum_{m =1}^\infty  \frac{\lambda_{f}(m)}{m^{1/4}}  e\left(\frac{m \overline{a}}{q}\right) I_1(m, q, x),
	\end{align*}
	where 
	\begin{align*}
	I_1(m, q, x)=\int_0^\infty U(y) y^{-iv} e\left(  \alpha X^{\beta} y^{\beta} - \frac{ X x y}{qQ}  \pm \frac{2 \sqrt{m Xy}}{q} \right) dy. 
	\end{align*}
\end{lemma}
\subsection{GL(3) Voronoi} We now apply $GL(3)$ Voronoi summation formula to the $n$-sum in \eqref{R(X) after delta}. We note that the same $n$-sum appeared in Subsection \ref{GL3 voro}. For the sake of completeness, we repeat  Lemma \ref{first gl3 voronoi} of Subsection \ref{GL3 voro}. 
\begin{lemma}\label{2nd gl3 voronoi}
	Let $\psi(n)= n^{iv} \, e\left(\frac{nx}{qQ}\right)  V\left(\frac{n}{X}\right)$ and $N_0$ be as in \eqref{n-sumbound}. Then we have 
	\begin{align*}
	\mathop{\sum}_{n=1}^\infty \lambda_\pi(r,n)e\left(\frac{an}{q}\right)\psi(n)=&\frac{X^{2/3+iv}}{qr^{2/3}} \sum_{n_1|qr}n_1^{1/3}\sum_{n_2\ll N_0/n_1^2}\frac{\lambda_\pi(n_1,n_2)}{n_2^{1/3}}S(r\bar a,  n_2; qr/n_1)\\
	\nonumber &\times \int_0^\infty V(z)z^{iv}e\left(\frac{Xxz}{qQ}\pm \frac{3(Xn_1^2n_2z)^{1/3}}{qr^{1/3}}\right)\mathrm{d}z.
	\end{align*}
\end{lemma}

and $N_0$ is given by \eqref{n-sumbound}.  We note that there are $4$ integral in above expression. 

\subsection{Simplification of the integrals} 
On substituting Lemma \ref{dual m sum} and Lemma \ref{2nd gl3 voronoi} in \eqref{R(X) after delta} we get that 
\begin{align}\label{R(X) after summation formula}
R(X)=& \frac{X^{17/12}}{QKr^{2/3}} \int_{\mathbb{R}}W(x)\int_{\mathbb R}V\left(\frac{v}{K}\right)\sum_{ q\leq Q}\frac{g(q,x)}{q^{5/2}}   \sideset{}{^\star}{\sum}_{a \, \rm mod \, q} \notag\\
& \times \sum_{m \ll M_0}  \frac{\lambda_{f}(m)}{m^{1/4}} e\left(\frac{m \overline{a}}{q}\right) I_1(m, q, x) \sum_{n_1|q}n_1^{1/3}  \notag \\
&  \times   \sum_{n_2 \ll {N_{0}}/ {n_{1}^2}} \frac{\lambda_\pi(n_1,n_2)}{n_2^{1/3}}S(r\bar a,  n_2; qr/n_1) I_2(n, q, x) \mathrm{d}v \mathrm{d}x, 
\end{align}
 where 
\begin{equation}
I_2(n, q, x) := \int_0^\infty V(z)z^{iv}e\left(\frac{Xxz}{qQ}\pm \frac{3(Xn_1^2n_2z)^{1/3}}{qr^{1/3}}\right)\mathrm{d}z. 
\end{equation} 
 We now consider the above four fold integral 

\begin{align*}
 &\int_{\mathbb{R}}W(x)\int_{\mathbb R}V\left(\frac{v}{K}\right) g(q,x) \int_0^\infty U(y)y^{-iv}e\left(\alpha X^{\beta}y^{\beta} \right)\int_0^\infty   V(z)z^{iv} \\
& \times   e\left( \frac{Xx(z-y)}{qQ}   \pm  \frac{2 \sqrt{m Xy}}{q} \pm \frac{3(Xn_1^2n_2z)^{1/3}}{qr^{1/3}}\right) \mathrm{d}z\mathrm{d}y \,  \mathrm{d}v \, \mathrm{d}x.  
\end{align*}
We observe that this is similar as the four integral in Section \ref{4 integral}. Thus like in Subsection \ref{4 integral} considering  the $x$ integral we obtain  the restriction $|z-y| \ll X^\epsilon q/ QK$.   Writing $z= y+ u$ with $|u|\ll X^\epsilon q/Q K$, we see that the $y$-integral changes to
\begin{align} \label{second I}
I(m, n_1^2 n_2, q)& :=  \int_{\mathbb R}   U(y) 
e\left( \alpha X^{\beta} y^{\beta}  \pm  \frac{2 \sqrt{m Xy}}{q} \pm \frac{3(Xn_1^2n_2(y+u))^{1/3}}{qr^{1/3}}\right) \   \mathrm{d}y.
\end{align}
Thus, using the same arguments as in Subsection \ref{4 integral}, the four fold integral changes into
$$K \times \frac{q}{QK}\times  I \left(m,n_{1}^{2}n_{2},q \right).$$


\subsection{Cauchy-Schwarz inequality}
After simplification of the integrals, $R(X)$ in  \eqref{R(X) after summation formula} has essentially reduced to

\begin{align} \label{RX before chauchy}
& \frac{X^{17/12}}{Q^2Kr^{2/3}}\sum_{1\leq q\leq Q}\frac{1}{q^{3/2}}  \sideset{}{^\star}{\sum}_{a \, \rm mod \, q} \notag \\
& \times \sum_{n_1|q}n_1^{1/3} \sum_{n_2 \ll \frac{N_{0}} {n_{1}^2}} \frac{\lambda_\pi(n_1,n_2)}{n_2^{1/3}} S(r\bar a,  n_2; qr/n_1) \notag\\
&\times \sum_{m \ll M_0} \frac{\lambda_{f}(m)}{m^{1/4}}e\left(\frac{m \overline{a}}{q}\right)I(m, n_1^2 n_2, q).
   \end{align} 
Spliting the sum over $q$ in dyadic blocks $q\backsim C$ and  writing $q=q_1q_2$ with $q_1|(n_1r)^\infty$, $(n_1r,q_2)=1$, we see that $R(X)$  is dominated by

\begin{align*}
R(X)\ll  \sup_{C\leq Q} \frac{X^{5/12} }{C^{3/2}r^{2/3}} & \sum_{\frac{n_1}{(n_1,r)}\ll C}n_1^{1/3}\sum_{\frac{n_1}{(n_1,r)}|q_1|(n_1r)^\infty}\sum_{n_2\ll {N_0}/{n_1^2}} \frac{|\lambda_\pi(n_1,n_2)|}{n_2^{1/3}} \notag \\
&  \times \Big|\sum_{ q_2 \sim C/{q_1}}\sum_{m \ll M_0}  \frac{\lambda_{f}(m) }{m^{1/4}}\mathcal{C}(m, n_1,n_2,q)  I(m, n_1^2 n_2; q)\Big|,
\end{align*} 
where the character sum $\mathcal{C}(m, n_1,n_2,q) $  is defined as 
\begin{align*}
\mathcal{C}(...):=\sideset{}{^\star}\sum_{a \, {\rm mod} \, q}S(r \bar a,  n_2; qr/n_1)e\left(\frac{\bar{a}m}{q}\right)=\sum_{d|q}d\mu\left(\frac{q}{d}\right)\sideset{}{^ \star}\sum_{\substack{\alpha \, {\rm mod} \, qr/n_1 \\ n_1\alpha\equiv-m \, {\rm mod} \, d}}e\left(\frac{\bar{\alpha}n_2}{qr/n_1}\right).
\end{align*}
We  now analyze the sum inside $| \ |$. We split the $m$-sum into dyadic blocks $m \sim M_1$. On  applying the Cauchy's inequality to the $n_2$-sum ,  we get the following bound for $R(X)$:
\begin{align}\label{R(X) after cauchy}
R(X) \ll \mathop{\sup_{ \substack{M_1\ll M_0 \\ C \ll Q}}}\frac{X^{5/12} }{C^{3/2}r^{2/3}}\sum_{\frac{n_1}{(n_1,r)}\ll C}n_1^{1/3}\Theta^{1/2}\sum_{\frac{n_1}{(n_1,r)}|q_1|(n_1r)^\infty}\sqrt{\Omega} \ ,
\end{align}  
where 
\begin{align}
\Theta=\sum_{n_2\ll N_0/n_1^2} \frac{|\lambda_\pi(n_1,n_2)|^2}{n_2^{2/3}},
\end{align} 
and 
\begin{align}\label{omega 1}
\Omega = \sum_{n_2 \ll N_0/ n_1^2}  \Big|   \sum_{ q_2 \sim C/{q_1}}   \sum_{m \sim M_1} \frac{\lambda_{f}(m) }{m^{1/4}}   \mathcal{C} (...)  I(m, n_1^2 n_2, q)\Big|^2.
\end{align}
\subsection{Poisson summation formula}
In this subsection, we will analyze $\Omega$ in \eqref{omega 1} . Analysis of $\Omega$  is  similar to the one which was carried  out in Subsection \ref{Poisson}. Thus, proceeding as in Subsection \ref{Poisson}, opening the absolute value square and applying the Poisson summation formula over $n_2$ with modulus $q_1 q_2 q_2^\prime r/n_1$ we arrive at
\begin{align}\label{omega 2}
\Omega \ll \frac{N_0}{n_1^2 M_1^{1/2}} \mathop{ \sum  \sum}_{ q_2 \,  q_2^\prime  \sim C/{q_1}}  \mathop{ \sum \sum}_{m , \,  m^\prime\sim M_1} \sum_{n_2 \in \mathbb{Z}}  \left| \mathfrak{C}\right| \left| \mathfrak{J}\right|, 
\end{align}
where 
\begin{align}\label{c}
\mathfrak{C}=\mathop{\sum \sum}_{\substack{d|q \\ d^{\prime}|q^\prime}}dd^\prime\mu\left(\frac{q}{d}\right)\mu\left(\frac{q^\prime}{d^\prime}\right)\mathop{\sideset{}{^ \star}\sum_{\substack{\alpha \, {\rm mod} \, qr/n_1 \\ n_1\alpha\equiv-m \, {\rm mod} \, d}} \  \sideset{}{^ \star}\sum_{\substack{\alpha^\prime \, {\rm mod} \, q^\prime r/n_1 \\ n_1\alpha^\prime \equiv-m^\prime \, {\rm mod} \, d^\prime}}}_{\bar{\alpha}q_2^\prime -\bar{\alpha}^\prime q_2\equiv -n_2  \, {\rm mod} \, q_1q_2q_2^\prime r/n_1}1
\end{align}
 and 
\begin{align} \label{last integral}
\mathfrak{J} = \int_{\mathbb{R}} W(w)I(m,N_0w, q) \overline{I(m^\prime,N_0 w, q^\prime)}  e \left( - \frac{N_0  n_2w}{q_1  q_2 q_2^\prime rn_1}\right) \mathrm{d} w. 
\end{align}
Note that we have used Deligne's bound \eqref{gl2 ramanujan} to estimate $\lambda_{f}(m)$ and $\lambda_{f}(m^{\prime})$. By repeated integration by parts we see that the integral $\mathfrak{J}$ is negligibly small unless 
\begin{align}\label{n_2 star}
n_{2} \ll  \frac{\sqrt{XK} Cn_1r}{N_{0}q_1} X^{\epsilon}:= \tilde{N}.
\end{align}
Note that this is the same $\tilde{N}$ as in Subsection \ref{Poisson}.
\subsection{Character sum analysis} In this subsection, we will analyze $\mathfrak{C}$ in \eqref{c}. We note that same character sum appeared in \cite{munshi2}. The following lemma is taken from \cite{munshi2}. 
\begin{lemma} \label{bound for c}
	Let  $\mathfrak{C}$ be as in \eqref{c}. Then, for $n_2=0$, we have $q_2=q_2^{\prime}$ and
	\begin{align*}
	\mathfrak{C}  \ll  \mathop{\sum \sum}_{\substack{d ,d^{\prime} \vert q \\ (d,d^{\prime})|(m-m^{\prime})}} dd' \frac{qr}{[d,d^{\prime}]}. 
	\end{align*}
	For $n_2 \neq 0$, we have 
	\begin{align*}
	\mathfrak{C} \ll \frac{q_{1}^2 \, r (m,n_{1})}{n_{1}} \mathop{\sum \sum}_{\substack{d_{2} \mid (q_{2},  n_{1} q_{2}^{\prime}- mn_{2}) \\ d_{2}^{\prime} \mid (q_{2}^{\prime},  n_{1} q_{2} + m^{\prime} n_{2})}} \, d_{2} d_{2}^{\prime} \, .
	\end{align*}
\end{lemma}
\begin{proof}
	In the case $n_2=0$, it follows from  the congruence conditions in the definition  of  $\mathfrak{C}$ in \eqref{c} that   $ \bar{\alpha}q_2^\prime -\bar{\alpha}^\prime q_2\equiv 0  \, {\rm mod} \, q_1q_2q_2^\prime r/n_1$, which implies that $q_2=q_2^{\prime}$ and $\alpha=\alpha^{\prime}$. So we can bound the character sum  $\mathfrak{C}$   as 
	\begin{align*}
	\mathfrak{C}  \ll   \mathop{\sum \sum}_{\substack{d ,d^{\prime} \vert q}} dd'  \mathop{\sideset{}{^\star} \sum_{\alpha \; {\rm mod} \; qr/n_1 } }_{\substack{n_1\alpha\equiv-m \, {\rm mod} \, d \\ n_1\alpha \equiv-m^\prime \, {\rm mod} \, d^\prime  }} 1 \ll \mathop{\sum \sum}_{\substack{d ,d^{\prime} \vert q \\ (d,d^{\prime})|(m-m^{\prime})}} dd' \frac{qr}{[d,d^{\prime}]}.
	\end{align*}
	Hence we get the first part of the lemma. For the second part, using the Chinese Remainder theorem,  we observe that $\mathfrak{C}$ can be dominated by a product of two sums $ \mathfrak{C} \ll \mathfrak{C}^{(1)} \mathfrak{C}^{(2)}$,
	where
	$$\mathfrak{C}^{(1)} = \mathop{\sum \sum}_{\substack{d_{1}, d_{1}^{\prime} | q_{1}}} d_{1} d_{1}^{\prime}  \; \mathop{\sideset{}{^\star}{\sum}_{\substack{\beta \; \rm mod \; \frac{q_{1}r}{n_{1}} \\  n_{1} \beta \;  \equiv \; - m \; \rm mod \;  d_{1}}} \  \sideset{}{^\star}{\sum}_{\substack{\beta^{\prime} \; \rm mod \; \frac{q_{1}r}{n_{1}} \\  n_{1}   \beta^{\prime} \;  \equiv \; - m^{\prime} \; \rm mod \;  d_{1}^{\prime}}}}_{ \overline{\beta} q_{2}^{\prime} - \overline{\beta^{\prime}} q_{2} + n_{2} \; \equiv \; 0 \; {q_{1}r }/{n_{1}} } \; 1 $$
	and
	$$\mathfrak{C}^{(2)} = \mathop{\sum \sum}_{\substack{d_{2} \mid q_{2} \\ d_{2}^{\prime} \mid q_{2}^{\prime}}} d_{2} d_{2}^{\prime}  \; \mathop{\sideset{}{^\star}{\sum}_{\substack{\beta \; \rm mod \; q_{2} \\  n_{1} \beta \;  \equiv \; - m \; \rm mod \;  d_{2}}} \, \sideset{}{^\star}{\sum}_{\substack{\beta^{\prime} \; \rm mod \; q_{2}^{\prime} \\  n_{1}   \beta^{\prime} \;  \equiv \; - m^{\prime}  \; \rm mod \;  d_{2}^{\prime}}}}_{ \overline{\beta} q_{2}^{\prime} - \overline{\beta^{\prime}} q_{2} + n_{2} \; \equiv \; 0 \; q_{2} q_{2}^{\prime} } \; 1.$$
	
	In the second sum $\mathfrak{C}_{\pm}^{(2)}$, since  $(n_{1},q_{2}q_{2}^{\prime})=1$,  we get  $\beta \equiv -m\bar{n_1}  \rm \, mod \, d_{2}$ and $\beta^{\prime} \equiv \, -m^{\prime}\bar{n_1} \, \rm \, mod \, d_{2}^{\prime}$. Then using the congruence modulo $q_{2} q_{2}^{\prime}$, we conclude that
	
	$$\mathfrak{C}^{(2)} \ll  \mathop{\sum \sum}_{\substack{d_{2} \mid (q_{2},  n_{1} q_{2}^{\prime}- mn_{2}) \\ d_{2}^{\prime} \mid (q_{2}^{\prime},  n_{1} q_{2} + m^{\prime} n_{2})}} \, d_{2} d_{2}^{\prime}.$$
	In the first sum $\mathfrak{C}_{\pm}^{(1)}$, the congruence condition determines $\beta^{\prime}$ uniquely in terms of $\beta$, and hence 
	$$\mathfrak{C}^{(1)} \ll \mathop{\sum \sum}_{\substack{d_{1}, d_{1}^{\prime} | q_{1}}} d_{1} d_{1}^{\prime} \sideset{}{^\star}{\sum}_{\substack{\beta \; \rm mod \; {q_{1}r}/{n_{1}} \\  n_{1} \beta \;  \equiv \; - m \; \rm mod \;  d_{1}}} \, 1 \ll \frac{r \, q_{1}^2 \, (m,n_{1}) }{n_{1}} .$$
	Hence we have the lemma.
\end{proof}

\subsection{Stationary Phase analysis}
In this subsection, we will analyze the integral transform $\mathfrak{J}$ given in \eqref{last integral}. We have the following lemma.
\begin{lemma}\label{second L^2 bound}
	Let $\mathfrak{J}$ be as in \eqref{last integral}.  Then we have 
	\begin{align*}
		\mathfrak{J}\ll 1/X^{\beta}.
	\end{align*}
	Moreover, for $C \gg X^{1/2-\beta/2}$, we have
		$$\mathcal{J} \ll   \frac{1}{X^{\beta}} \frac{C^{2/3}r^{1/3}n_1^{1/3}}{n_2^{1/3}q_1^{1/3}N_0^{1/3}}.$$
\end{lemma}
\begin{proof}
We mention here that the proof of the lemma will be similar to that of Lemma \ref{J better}. To start with, we first analyze $I(m,N_0w, q)$ which is given as (see \eqref{second I})
	\begin{align} 
	I(m, N_0 w, q)=  \int_{\mathbb R}   U(y) 
	e\left( \alpha X^{\beta} y^{\beta}  \pm  \frac{2 \sqrt{m Xy}}{q} \pm \frac{3(XN_0w(y+u))^{1/3}}{qr^{1/3}}\right) \   \mathrm{d}y.
	\end{align}
	Let
	$$A= \frac{2 \sqrt{m X}}{q} \quad \text{and} \;  B= \frac{3 \left(X N_{0}w\right)^{1/3}}{qr^{1/3}}.$$
	As we observed in the proof of Lemma \ref{J better} that we may ignore $u$ in the phase function of $I(m,N_0w,q)$. 
	Thus the phase function in the exponential integral $I(...)$ is essentially given by
	\begin{align}\label{phase}
		f(y)= \alpha X^{\beta} y^{\beta} \pm A \sqrt{y} \pm B y^{1/3}.
	\end{align}
	Recall that $$A=\frac{2 \sqrt{m X}}{q} \ll \frac{2 \sqrt{M_0 X}}{C}=\max\left(\alpha\beta X^{\beta}, \frac{\sqrt{KX}}{C}\right).$$
	Thus
	 \begin{align}\label{M0 size for large C}
	 	\frac{2 \sqrt{M_0 X}}{C} \asymp X^{\beta} \iff C \gg X^{1/2-\beta/2-\eta/2} \iff  \frac{3 \left(X N_{0}w\right)^{1/3}}{qr^{1/3}} \ll X^{\beta}.
	 \end{align}
	In other words, if $C \ll  X^{1/2-\beta/2-\eta/2}$, then
	$$ \frac{2 \sqrt{M_0 X}}{C} \asymp \frac{3 \left(X N_{0}w\right)^{1/3}}{Cr^{1/3}} \asymp \frac{\sqrt{KX}}{C}\gg X^{\beta}.$$
	Hence if $C \ll  X^{1/2-\beta/2-\eta/2-\epsilon}$, then the $y$-integral will be negligibly small unless 
	$$A \asymp \frac{2 \sqrt{M_0 X}}{C} \iff M_1 \asymp M_0.$$
	 Also if $C \gg  X^{1/2-\beta/2-\eta/2+\epsilon}$, then the $y$-integral is negligibly small unless 
	$$A \asymp \frac{2 \sqrt{M_0 X}}{C} \iff M_1 \asymp M_0.$$ 
	 We make a change of varible $y \rightsquigarrow y^2$ so that the new phase function looks like
		$$f(y)= \alpha X^{\beta} y^{2\beta} \pm A y \pm B y^{2/3}.$$
		On computing the second order derivative we get
			$$f^{\prime \prime}(y)= 2\alpha \beta (2\beta -1) X^{\beta} y^{2\beta-2} \mp \frac{2B}{9y^{4/3}}.$$
		If $B \asymp X^{\beta}$ and there is a negative sign in the second term, then using the same arguments as in Lemma \ref{L^2bound},  we get 
			\begin{align*}
				\mathfrak{J}\ll \int_{\mathbb{R}} W(w) \vert \mathcal{I}(m,N_{0}w,q)\vert^{2} \mathrm{d}w \ll \frac{1}{B} \asymp \frac{1}{X^{\beta}}.  
			\end{align*} 
			In the other situations, using the second derivative bound, we get
			$$I(m, N_0 w, q) \ll1/X^{\beta/2}.$$
			And hence first part of the lemma follows. To prove the second part, we  go back to the phase function in \eqref{phase}
				$$f(y)= \alpha X^{\beta} y^{\beta} \pm A \sqrt{y} \pm B y^{1/3}.$$
			 Note that for $q \sim C \gg X^{{1/2-\beta/2}+ \epsilon}$, we have $B \ll X^{\beta - \eta/2}$ and  $X^{\beta} \gg A $. Also if the second term has positive sign or  $A \ll X^{\beta-\epsilon}$ then the $y$-integral is negligibly small. Hence, we may  assume that the second term has negative sign and $A \asymp X^{\beta}$. Next, we  find the stationary point, say, $y_{\text{stat}}$ of the above phase function. In fact, it can be written as
	$$y_{\text{stat}} = y_{0}+y_{1}+y_2+ \ldots,$$
	where $y_{k}= h_{k}(X,A) \left(B/X^{\beta}\right)^{k}$ with $h_{k}(X,A) \ll 1$ for  $k \geq 0$.   Explicit calculation yields
	$$y_{0}= \left(\frac{2\alpha \beta X^{\beta}}{A}\right)^{\frac{2}{1- 2\beta}}, \quad y_{1}= \frac{\mp B}{3 \alpha \beta \left(\beta -1\right) X^{\beta}} \left(\frac{2\alpha \beta X^{\beta}}{A}\right)^{\frac{2(4-3 \beta)}{3 \left(1-2\beta\right)}} .$$
	Therefore, using the stationary phase analysis Lemma \ref{stationaryphase}, the integral $I(...)$ is essentially given by
	$$\frac{1}{X^{\frac{\beta}{2}}} \;e^{2 \pi i g_{0} \left(X,A\right)} e \left( B g_{1}\left(X,A\right) +B^{2} g_{2}\left(X,A\right) +  O \left(\frac{B^{3}}{X^{2\beta}}\right) \right),$$
	where $g_0(X,A)\ll X^{\beta}$, $g_1(X,A)\ll 1$ and $g_2(X,A)\ll 1/X^{\beta}$. Similar analysis can be done for $I(m^\prime,N_0 w, q^\prime)$. Hence  the integral $\mathcal{J}$ is given by
	\begin{align*}
	\frac{1}{X^{\beta}} \int W(w) \;& e \left(\left(B g_{1}\left(X,A\right)-B' g_{1}\left(X,A'\right)\right) +\left(B^{2} g_{2}\left(X,A\right)-B'^{2} g_{2}\left(X,A'\right)\right)\right) \\
	\nonumber & \times  e \left( -\frac{N_0 n_2w}{q_1q_2q_2^\prime r n_1} + O\left(\frac{B^3}{ X^{2 \beta}}\right)+O\left(\frac{B^{\prime 3}}{ X^{2 \beta}}\right)\right)  \mathrm{d}w.
	\end{align*}
	We note that 
	$$\frac{B^3}{ X^{2 \beta}} \asymp \frac{B^{\prime 3}}{ X^{2 \beta}}\asymp \frac{XN_0}{C^3rX^{2 \beta}}.$$
	Since $n_{2} \neq 0 $, we get
	$$\frac{n_{2}N_{0}w}{q_1q_2q_2^{\prime}rn_1} \gg \frac{N_{0}}{(n_1,r)C^{2}r} \gg\frac{N_{0}}{C^{2}r^2}\gg  \frac{XN_{0}}{C^{3}rX^{2 \beta}},$$
	provided $\beta \geq 1/3$.
		Making the change of variable $y \rightarrow z^3$  and applying the third derivative bound for the exponential integral, we get
	$$\mathcal{J} \ll   \frac{1}{X^{\beta}} \frac{C^{2/3}r^{1/3}n_1^{1/3}}{n_2^{1/3}q_1^{1/3}N_0^{1/3}}.$$
	Hence the lemma holds.
\end{proof}
\begin{lemma}\label{m restriction}
		Let $\mathfrak{J}$ be as in \eqref{last integral}. Then for $n_2=0$, $\mathfrak{J}$ is negligibly small unless
	$$m- m^{\prime} \ll \frac{CM_1^{1/2}}{X^{1/2}}.$$
\end{lemma}
\begin{proof}
	First note that for $n_2=0$, by Lemma \ref{bound for c}, we have $q_2=q_2^{\prime}$. Thus on substituting $n_2=0$, the expressions for $I(m, N_0 w, q)$ and $I(m^{\prime}, N_0 w, q^{\prime})$ in $\mathfrak{J}$, we get
	\begin{align*}
		\mathfrak{J}=& \int \int U(y_{1}) U(y_{2})e\left( \alpha X^{\beta} (y_1^{\beta}-y_2^{\beta})  \pm  \frac{2 \sqrt{m Xy_1}}{q} \mp \frac{2 \sqrt{m^{\prime} Xy_2}}{q} \right) \\
		& \times \int W(w) e \left(\frac{3(XN_{0}w)^{1/3}}{qr^{1/3}}\left(\pm(y_{1}+u)^{1/3}\mp(y_{2}+u)^{1/3}\right)\right) \, \mathrm{d}w \ \mathrm{d}y_1 \ \mathrm{d}y_2
	\end{align*}
	Making a change of variable $w \rightsquigarrow w^3 $ in the $w$-integral, we see that it is negligibly small unless $$|y_1-y_2| \ll Cr^{1/3}/(XN_0)^{1/3} \ll 1/K.$$
	Next taking $y_1=y_2+u_1$, with $u_1 \ll 1/K$, and considering the $y_2$-integral, we see that it is negligibly small unless $$m- m^{\prime} \ll \frac{CM_1^{1/2}}{X^{1/2}}.$$
	Hence the lemma follows.
	\end{proof}

\subsection{Zero frequency} In this subsection, we  estimate $R(X)$ in \eqref{R(X) after cauchy} when $n_2=0$. Estimates in this subsection are similar to those in Section \ref{zero freq}. Let $\Omega_{0}$  denotes the part  of $\Omega$ in \eqref{omega 1} corresponding to $n_2=0$ and let $R^{0}(X)$ denotes the part of $R(X)$ in \eqref{R(X) after cauchy} corresponing to $\Omega_{0}$. We have  the following lemma. 
\begin{lemma} \label{R(X) zero frq bound}
	Let $\Omega_0$ and $R^{0}(X)$ be as above. Then we have
		\begin{align*}
\Omega_0\ll  \frac{N_0 \left| \mathfrak{J}\right|M_1^{1/2} C^3r}{n_1^2 q_1 },
	\end{align*}
	and 
 \begin{align*}
	R^{0}(X)  \ll r^{1/2}(\alpha\beta)^{1/2}X^{3/4+3\beta/4-3\eta/4}.
	 \end{align*} 
	\end{lemma}
\begin{proof}
	Using the bound \eqref{omega 2} for $\Omega$, we see that
	\begin{align*}
	\Omega_0 \ll \frac{N_0}{n_1^2 M_1^{1/2}} \mathop{ \sum  \sum}_{ q_2 \,  q_2^\prime  \sim C/{q_1}}  \mathop{ \sum \sum}_{m , \,  m^\prime\sim M_1}  \left| \mathfrak{C}\right| \left| \mathfrak{J}\right|. 
	\end{align*}
	Now using   Lemma \ref{bound for c} and Lemma \ref{m restriction}, we arrive at
	 \begin{align*}
	\Omega_0 & \ll \frac{N_0|\mathfrak{J}|}{n_1^2 M_1^{1/2} }\sum_{q_2 \sim C/q_1} qr  \mathop{\sum \sum }_{d \, d^\prime \mid q }  (d, d^\prime ) \mathop{\sum \sum}_{\substack { m, m^\prime \sim M_1 \\ (d, d^\prime ) \mid m-m^\prime}} 1 \\
	& \ll \frac{N_0|\mathfrak{J}|}{n_1^2 M_1^{1/2}  }  \sum_{q_2 \sim C/q_1} qr \mathop{\sum \sum }_{d \, d^\prime \mid q } \left( M_1 (d, d^\prime ) + \frac{M_1^{1/2}C}{X^{1/2}}\right) \\
	& \ll\frac{N_0|\mathfrak{J}|C^2r}{n_1^2 M_1^{1/2}q_1}\left( M_1 C + \frac{M_1^{1/2}C}{X^{1/2}}\right).
	 \end{align*} 
	  Thus we get the first part of the lemma. Substituting the above bound of $\Omega_0$ in place of $\Omega$ in \eqref{R(X) after cauchy} we get 
	  \begin{align*}
	  R^{0}(X)\ll  \mathop{\sup_{ \substack{M_1\ll M_0 \\ C \ll Q}}} \, \frac{X^{5/12}|\mathfrak{J}|^{1/2} }{r^{2/3}C^{3/2}} \sum_{\frac{n_1}{(n_1,r)} \ll C} n_1^{1/3}\Theta^{1/2}\sum_{\frac{n_1}{(n_1,r)}|q_1|(n_1r)^\infty}\frac{\sqrt{N_0r}C^{3/2}M_1^{1/4}}{n_1q_1^{1/2}},
	  \end{align*}
	  Using the trivial bound for $q_1$ and replacing the range for $n_1$ by the longer range $n_1 \ll Cr$, we arrive at 
	  \begin{align*}
	  	\mathop{\sup_{ \substack{M_1\ll M_0 \\ C \ll Q}}}{r^{-1/6}X^{5/12}N_0^{1/2}M_1^{1/4}|\mathfrak{J}|^{1/2} }\sum_{n_1\ll Cr}\frac{(n_1,r)^{1/2}}{n_1^{7/6}}\Theta^{1/2}.
	  \end{align*}
	  Using \eqref{theta bound} to bound $n_1$-sum, we see that
	  \begin{align*}
	  	 R^{0}(X)\ll \mathop{\sup_{ \substack{M_1\ll M_0 \\ C \ll Q}}}{r^{-1/6}X^{5/12}N_0^{2/3}M_1^{1/4}|\mathfrak{J}|^{1/2}}.
	  \end{align*}
	  Using $N_0 \ll K^{3/2}X^{1/2}r$ and $M_1 \ll M_0= X^\epsilon  \ \max \left(\alpha^2\beta^2 C^2 X^{2\beta-1}, \  K \right)$, we arrive at
	  \begin{align}\label{zero freq bound }
	  	 R^{0}(X)\ll \mathop{\sup_{ \substack{ C \ll Q}}}{X^{3/4}r^{1/2}KM_0^{1/4}|\mathfrak{J}|^{1/2}}\ll r^{1/2}(\alpha\beta)^{1/2}X^{3/4+3\beta/4-3\eta/4}.
	  \end{align}
	Hence the lemma follows.
 \end{proof}
\subsection{Non-zero frequency} In this subsection, we will estimate $R(X)$ for  $n_2 \neq 0$. Let $\Omega_{\neq 0}$ denotes the contribution of $n_2 \neq 0$ to $\Omega$ in \eqref{omega 1}. Let $R^{\neq 0}(X)$ denotes the part of $R(X)$ in \eqref{R(X) after cauchy} corresponding to $\Omega_{\neq 0}$. We have the following lemma.
\begin{lemma}\label{omega nonzero bound}
	We have 
	$$\Omega_{\neq 0} \ll\frac{ (XK)^{1/2}r^2\vert \mathfrak{J}\vert C^3} {n_{1}^2q_1M_1^{1/2}} \left(\frac{C^2n_1}{q_1^2}+\frac{Cn_1M_1}{q_1}+M_1^2 \right). $$
\end{lemma}
\begin{proof}
We start by analyzing $\Omega$ in \eqref{omega 2}. 	Using Lemma \ref{bound for c} in place of $\mathfrak{C}$ in \eqref{omega 2}, we get
	\begin{align*}
	\Omega^{\neq 0} & \ll \frac{q_{1}^2 N_0r} {n_{1}^3M_1^{1/2}} \mathop{\sum \sum}_{\substack{d_{2} \mid q_{2} \\ d_{2}^{\prime} \mid q_{2}^{\prime}}} d_{2} d_{2}^{\prime} \, \mathop{\sum \sum }_{\substack{q_2, q_{2}^{\prime} \sim \frac{C}{q_{1}}  } }  \mathop{ \mathop{\sum \ \sum \ \  \ \sum}_{m,m^{\prime} \sim M_{1} \ n_2 \in \mathbf{Z}-\{0\}}}_{\substack{ n_{1} q_{2}^{\prime} d_{2}^{\prime}- m n_{2} \, \equiv \,  0 \, \rm mod \, d_{2} \\  n_{1} q_{2} d_{2}+ m^{\prime} n_{2} \, \equiv \,  0 \, \rm mod \, d_{2}^{\prime}}} (m,n_{1})\vert \mathfrak{J}\vert .
	\end{align*}
	Further writing $q_2d_2$ in place of $q_2$ and $q_2^{\prime}d_2^{\prime}$ in place of $q_2^{\prime}$, we arrive at
	\begin{align}\label{omega nonzero}
	\Omega^{\neq 0} & \ll \frac{q_{1}^2 N_0r} {n_{1}^3M_1^{1/2} } \mathop{\sum \sum}_{d_{2}, d_{2}^{\prime} \ll C/q_{1} } d_{2} d_{2}^{\prime} \, \mathop{\sum \sum }_{\substack{q_{2} \sim \frac{C}{d_{2}q_{1}} \\ q_{2}^{\prime} \sim \frac{C}{d_{2}^{\prime} q_{1}}}}  \mathop{ \mathop{\sum \ \sum \ \  \ \sum}_{m,m^{\prime} \sim M_{1} \ n_2 \in \mathbf{Z}-\{0\}}}_{\substack{ n_{1} q_{2}^{\prime} d_{2}^{\prime}- m n_{2} \, \equiv \,  0 \, \rm mod \, d_{2} \\  n_{1} q_{2} d_{2}+ m^{\prime} n_{2} \, \equiv \,  0 \, \rm mod \, d_{2}^{\prime}}} (m,n_{1})\vert \mathfrak{J}\vert . 
	\end{align}
	Next, we  count the number of $m$ and $ m^{\prime}$ in the above expression.  We have   
	\begin{align*}
	\sum_{\substack{m \sim M_{1} \\ n_{1} q_{2}^{\prime} d_{2}^{\prime}- m n_{2} \, \equiv \,  0 \, \rm mod \, d_{2}}} (m,n_{1}) & = \sum_{\ell \mid n_{1}} \ell \,  \sum_{\substack{m \sim M_{1}/\ell \\ n_{1} q_{2}^{\prime} d_{2}^{\prime} \bar{\ell}- m n_{2} \, \equiv \,  0 \, \rm mod \, d_{2}}} 1  
	& \ll (d_{2},n_{2}) \, \left(n_{1}+\frac{M_{1}}{d_{2}}\right) \notag.
	\end{align*} 
	In the above estimate we have used the fact $(d_2,n_2)=1$. Counting the number of $m$ in a similar fashion we get that  $m$-sum  and $m^{\prime}$-sum in \eqref{omega nonzero}  is dominated by 
	$$ X^{\epsilon}(d_{2}^{\prime}, n_{1} q_{2} d_{2}) \, (d_{2}, n_{2}) \left(n_{1}+\frac{M_{1}}{d_{2}}\right) \left(1+\frac{M_{1}}{d_{2}^{\prime}}\right).$$ 
	Now substituting the above bound in \eqref{omega nonzero}, we arrive at
	\begin{align*}
	\frac{q_{1}^2 N_0r\vert \mathfrak{J}\vert} {n_{1}^3M_1^{1/2}} \mathop{\sum \sum}_{d_{2}, d_{2}^{\prime} \ll \frac{C}{q_1}} d_{2} d_{2}^{\prime} \, \mathop{\sum \sum }_{\substack{q_{2} \sim \frac{C}{d_{2}q_{1}} \\ q_{2}^{\prime} \sim \frac{C}{d_{2}^{\prime} q_{1}}}} \sum_{1<|n_2|\ll \tilde{N}}(d_{2}^{\prime}, n_{1} q_{2} d_{2})  (d_{2}, n_{2}) \left(n_{1}+\frac{M_{1}}{d_{2}}\right) \left(1+\frac{M_{1}}{d_{2}^{\prime}}\right).
	\end{align*}
	Now summing  over $n_2$, $q_2^{\prime}$, we get the following expression:
	\begin{align*}
	\Omega^{\neq 0} \ll \frac{q_{1} N_0r\tilde{N}\vert \mathfrak{J}\vert C} {n_{1}^3M_1^{1/2} } \mathop{\sum \sum}_{d_{2}, d_{2}^{\prime} \ll C/q_{1} } d_{2}  \, \mathop{ \sum }_{\substack{q_{2} \sim \frac{C}{d_{2}q_{1}}  }} (d_{2}^{\prime}, n_{1} q_{2} d_{2}) \,  \left(n_{1}+\frac{M_{1}}{d_{2}}\right) \left(1+\frac{M_{1}}{d_{2}^{\prime}}\right).
	\end{align*}
	Next we  sum over $d_2^{\prime}$ to  get
	\begin{align*}
	\Omega^{\neq 0} & \ll \frac{q_{1} N_0r\tilde{N}\vert \mathfrak{J}\vert C} {n_{1}^3M_1^{1/2}} \mathop{\sum }_{d_{2} \ll C/q_{1} } d_{2}  \, \mathop{ \sum }_{\substack{q_{2} \sim \frac{C}{d_{2}q_{1}}  }}    \left(n_{1}+\frac{M_{1}}{d_{2}}\right) \left(\frac{C}{q_1}+M_1\right).
	\end{align*}
	Finally executing the remaining sums,  we get  
	\begin{align}\label{omega nonzero final }
	\Omega^{\neq 0}\ll \frac{ N_0r\tilde{N}\vert \mathfrak{J}\vert C^2} {n_{1}^3M_1^{1/2}} \left(\frac{Cn_1}{q_1}+M_1\right) \left(\frac{C}{q_1}+M_1\right).
	\end{align}
	Lastly using $N_0\tilde{N}=\frac{n_1r}{q_1}C\sqrt{XK}$, and expanding the brackets gives us the lemma.
\end{proof}
Now we will estimate $R^{\neq 0}(X)$. We will analyze it in two cases.

\subsubsection{\bf Estimates for small  $q \sim C$} Let $R_{small}^{\neq 0}(X)$ denotes the contribution of $C \ll  X^{1/2-{\beta/2}}$ and $n_2 \neq 0$ to $R(X)$ in \eqref{R(X) after cauchy}. Then
  \begin{lemma} \label{R(X) bound for small}
We have 
\begin{align*}
 R_{small}^{\neq 0}(X) &\ll r(\alpha \beta)^{3/2}X^{3/4+3\beta/4-\eta/2}.
\end{align*}
\end{lemma}
\begin{proof}
	Using Lemma \ref{omega nonzero bound} for $\Omega$ in the expression of $R(X)$ in \eqref{R(X) after cauchy}, we see that
	\begin{align*}
 R_{small}^{\neq 0}(X) \ll &\mathop{\sup_{ \substack{M_1\ll M_0 \\ C \ll Q}}}\frac{X^{5/12} }{C^{3/2}r^{2/3}}\sum_{\frac{n_1}{(n_1,r)}\ll C}n_1^{1/3}\Theta^{1/2} \\ 
 & \times \sum_{\frac{n_1}{(n_1,r)}|q_1|(n_1r)^\infty} \frac{ r\vert \mathfrak{J}\vert^{1/2} C^{3/2}(XK)^{1/4}} {n_{1}q_1^{1/2}M_1^{1/4}}\left(\frac{Cn_1^{1/2}}{q_1}+\frac{(Cn_1M_1)^{1/2}}{q_1^{1/2}}+M_1 \right).
	\end{align*}  
	Second and third terms of the right hand side can be estimated easily. Infact, let's consider the expression corresponding to the third term ($M_1$)
	\begin{align*}
	 \mathop{\sup_{ \substack{M_1\ll M_0 \\ C \ll Q}}}\frac{X^{5/12} }{C^{3/2}r^{2/3}}\sum_{\frac{n_1}{(n_1,r)}\ll C}n_1^{1/3}\Theta^{1/2} 
	 \sum_{\frac{n_1}{(n_1,r)}|q_1|(n_1r)^\infty}\frac{ r\vert \mathfrak{J}\vert^{1/2} C^{3/2}(XK)^{1/4}} {n_{1}q_1^{1/2}M_1^{1/4}}M_1 
	\end{align*}
	 Executing the $q_1$ sum trivially, we arrive at
	\begin{align*}
	\mathop{\sup_{ \substack{M_1\ll M_0 \\ C \ll Q}}}\frac{X^{5/12} r\vert \mathfrak{J}\vert^{1/2}}{C^{3/2}r^{2/3}}\sum_{\frac{n_1}{(n_1,r)}\ll C}\frac{(n_1,r)^{1/2}}{n_1^{7/6}} \Theta^{1/2} 
	{ (XK)^{1/4}C^{3/2} } M_1^{3/4}
	\end{align*}
	Using \eqref{theta bound} for $n_1$-sum, we get
	
		\begin{align}\label{third term}
	\mathop{\sup_{ \substack{M_1\ll M_0 \\ C \ll Q}}}\frac{X^{5/12} r\vert \mathfrak{J}\vert^{1/2}}{C^{3/2}r^{2/3}}N_0^{1/6} 
	{ (XK)^{1/4}C^{1/2} C} M_1^{3/4}
	\end{align}
	Now using $M_1 \ll M_0 \ll (\alpha \beta)^2X^{(1-\beta)}X^{2 \beta -1}$, as $C \ll X^{1/2- \beta/2}$,  $N_0 \ll X^{1/2}K^{3/2}r$ and Lemma \eqref{second L^2 bound} for $\mathfrak{J}$, we get
\begin{align}
	r^{1/2}(\alpha \beta)^{3/2}X^{3/4+3\beta/4-\eta/2}.
\end{align}
Now consider the expression corresponding to the second term ${(Cn_1M_1)^{1/2}}/{q_1^{1/2}}$
	\begin{align*}
\mathop{\sup_{ \substack{M_1\ll M_0 \\ C \ll Q}}}\frac{X^{5/12} }{C^{3/2}r^{2/3}}\sum_{\frac{n_1}{(n_1,r)}\ll C}n_1^{1/3}\Theta^{1/2} 
\sum_{\frac{n_1}{(n_1,r)}|q_1|(n_1r)^\infty}\frac{ r\vert \mathfrak{J}\vert^{1/2} C^{3/2}(XK)^{1/4}} {n_{1}q_1^{1/2}M_1^{1/4}}\frac{(Cn_1M_1)^{1/2}}{q_1^{1/2}}
\end{align*}
 Executing the $q_1$ sum trivially, we arrive at
\begin{align}\label{second term}
\mathop{\sup_{ \substack{M_1\ll M_0 \\ C \ll Q}}}\frac{X^{5/12} r\vert \mathfrak{J}\vert^{1/2}}{C^{3/2}r^{2/3}}\sum_{\frac{n_1}{(n_1,r)}\ll C}\frac{(n_1,r)}{n_1^{7/6}} \Theta^{1/2} 
{ (XK)^{1/4}C^2} M_1^{1/4}
\end{align}
Note that
\begin{align*}
\sum_{n_1\ll Cr}\frac{(n_1,r)}{n_1^{7/6}}\Theta^{1/2} \ll \left[\sum_{n_1 \ll Cr}\frac{(n_1,r)^2}{n_1}\right]^{1/2}\left[\mathop{\sum \sum}_{n_{1}^{2} n_{2} \leq N_0} \frac{\vert \lambda_{\pi}(n_{1},n_{2})\vert ^{2} }{(n_1^2n_2)^{2/3}}\right]^{1/2}\ll \,r^{1/2}N_0^{1/6}.
\end{align*}
Hence using the above bound,  $M_1 \ll M_0 \ll (\alpha \beta)^2X^{(1-\beta)}X^{2 \beta -1}$, $C \ll X^{1/2- \beta/2}$, $N_0 \ll X^{1/2}K^{3/2}r$ and Lemma \eqref{second L^2 bound}, we arrive at
\begin{align}
r(\alpha \beta)^{1/2}X^{1-\eta/2}.
\end{align}
Next we consider the expression corresponding to the first term ${Cn_1^{1/2}}/{q_1}$ 
	\begin{align}\label{first}
\mathop{\sup_{ \substack{M_1\ll M_0 \\ C \ll Q}}}\frac{X^{5/12} }{C^{3/2}r^{2/3}}\sum_{\frac{n_1}{(n_1,r)}\ll C}n_1^{1/3}\Theta^{1/2} 
\sum_{\frac{n_1}{(n_1,r)}|q_1|(n_1r)^\infty}\frac{ r\vert \mathfrak{J}\vert^{1/2} C^{3/2}(XK)^{1/4}} {n_{1}q_1^{1/2}M_1^{1/4}}\frac{Cn_1^{1/2}}{q_1}
\end{align}
 Estimating the above expression like the second term, we arrive at
\begin{align}\label{first term bound 1}
\mathop{\sup_{ \substack{M_1\ll M_0 \\ C \ll Q}}}\frac{X^{5/12} r\vert \mathfrak{J}\vert^{1/2}}{C^{3/2}r^{2/3}}r^{1/2}N_0^{1/6}{(XK)^{1/4}C^2} M_1^{1/4} \times \frac{\sqrt{C}}{\sqrt{M_1}}
\end{align}
Now if $C$ is not of the size $X^{1/2-\beta/2-\eta/2}$, then by the arguments in Lemma \ref{second L^2 bound}, we may assume that $M_1 \asymp M_0$. Hence we get 
\begin{align}
&\mathop{\sup_{ \substack{C \ll X^{1/2-\beta/2}}}}\frac{X^{5/12} r\vert \mathfrak{J}\vert^{1/2}}{C^{3/2}r^{2/3}}r^{1/2}N_0^{1/6}{(XK)^{1/4}C^2} M_0^{1/4} \times \frac{\sqrt{C}}{\sqrt{M_0}} \notag \\
&\ll \mathop{\sup_{ \substack{C \ll X^{1/2-\beta/2}}}}{X^{5/12-\beta/2} r^{1/3}}r^{1/2}N_0^{1/6}{(XK)^{1/4}} \frac{{C}}{{M_0^{1/4}}} \notag \\
&\ll {X^{5/12-\beta/2} r}X^{1/12}K^{1/4}{(XK)^{1/4}} \frac{{X^{1/4-\beta/4}}}{{X^{\beta/2-1/4}}} \ll rX^{5/4-3\beta/4-\eta/2}.
\end{align}
Next, we are left with the case when $C \asymp X^{1/2-\beta/2-\eta/2}$. If $M_1 \gg C/q_1$,  then, using similar calculations, we get back to the previous case. Now we assume that $M_1\ll C/q_1$.  
In this case, $$\frac{3 \left(X N_{0}\right)^{1/3}}{Cr^{1/3}} \asymp X^{\beta},$$
and hence $N_0 \asymp C^3X^{3\beta}r/X$, and consequently 
$$\tilde{N} \asymp  \frac{\sqrt{XK} Cn_1r}{N_{0}q_1}=\frac{CX^{1/3}r^{2/3}n_1}{q_1N_0^{2/3}} \asymp \frac{Xn_1}{CX^{2\beta}q_1}.$$
In this case, we adopt a different strategy for counting. We first consider the congruence relation in \eqref{omega nonzero}
$$n_{1} q_{2}^{\prime} d_{2}^{\prime}- m n_{2} \, \equiv \,  0 \, \rm mod \, d_{2}.$$
Note that 
$$n_{1} q_{2}^{\prime} d_{2}^{\prime}- m n_{2} \ll Cn_1/q_1+M_1\tilde{N} \ll Cn_1/q_1+Cn_1/q_1.$$
Let $d_{2}^{\prime} \sim D^{\prime}\gg D \sim d_2$ and 
$$n_{1} q_{2}^{\prime} d_{2}^{\prime}-mn_{2} = h d_{2}, \ \ \ \text{with} \ \ h \ll Cn_1/(q_1D).$$
Thus \eqref{omega nonzero} can be rewritten as 
	\begin{align*}
\Omega^{\neq 0} & \ll \frac{q_{1}^2 N_0r} {n_{1}^3M_1^{1/2} } \mathop{\sum \sum}_{d_{2}, d_{2}^{\prime} \ll C/q_{1} } d_{2} d_{2}^{\prime} \, \mathop{\sum \sum }_{\substack{q_{2} \sim \frac{C}{d_{2}q_{1}} \\ h \ll Cn_1/q_1D}}  \mathop{ \mathop{\sum \ \sum \ \  \ \sum}_{m,m^{\prime} \sim M_{1} \ n_2 \in \mathbf{Z}-\{0\}}}_{\substack{h d_{2}+ m n_{2} \, \equiv \,  0 \, \rm mod \, d_{2}^{\prime} \\  n_{1} q_{2} d_{2}+ m^{\prime} n_{2} \, \equiv \,  0 \, \rm mod \, d_{2}^{\prime}}} (m,n_{1})\vert \mathfrak{J}\vert . 
\end{align*}
 Using the second congruence equation, the number of $m^{\prime}$  comes out to be $$O\left((n_{2},d_{2}^{\prime}) (1+{M_{1}}/{D^{\prime}})\right).$$
 The first congruence equation provides either the count for $d_{2}$ which comes to be $O((d_{2}^{\prime},h) D/D^{\prime})$ for $h\neq 0$ or  count  $d_{2}^{\prime}$ which comes out to be $O(X^{\epsilon})$ for $h =0$. Thus we arrive at 
 \begin{align*}
 	 \frac{q_{1}^2 N_0r} {n_{1}^3M_1^{1/2}X^{\beta} }\sum_{d_2^{\prime} \sim D^{\prime}}D^2 \mathop{\sum \sum }_{\substack{q_{2} \sim \frac{C}{d_{2}q_{1}} \\ h \ll Cn_1/q_1D}} \sum_{m \sim M_1}\sum_{0 <n_2 \ll \tilde{N}}(n_{2},d_{2}^{\prime}) (h, d_{2}^{\prime}) (m,n_{1}) \left(1+\frac{M_{1}}{D^{\prime}}\right)
 \end{align*}
 First summing over $n_2$, and then over $m$ and $d_2^{\prime}$ we arrive at 
 \begin{align*}
 	 \frac{q_{1}^2 N_0r} {n_{1}^3M_1^{1/2}X^{\beta} }M_1\tilde{N}D^{\prime}D^2\mathop{\sum \sum }_{\substack{q_{2} \sim \frac{C}{d_{2}q_{1}} \\ h \ll Cn_1/q_1D}}\left(1+\frac{M_{1}}{D^{\prime}}\right)
 \end{align*}
 Next summing over $q_2$ and $h$ we get 
  \begin{align*}
 \frac{ N_0r} {n_{1}^2X^{\beta}} M_1^{1/2}\tilde{N}C^2(D^{\prime}+M_1)\ll \frac{ N_0r} {n_{1}^2q_1X^{\beta}} M_1^{1/2}\tilde{N}C^3\ll \frac{(XK)^{1/2}C^4M_1^{1/2}r^2}{n_1q_1^2}
 \end{align*}
 We now substitute the above bound in place of $\Omega$ in \eqref{R(X) after cauchy} to get
 \begin{align}
 \mathop{\sup_{ \substack{M_1\ll M_0 \\ C \ll Q}}}\frac{X^{5/12} }{C^{3/2}r^{2/3}}\sum_{\frac{n_1}{(n_1,r)}\ll C}n_1^{1/3}\Theta^{1/2}\sum_{\frac{n_1}{(n_1,r)}|q_1|(n_1r)^\infty}\frac{(XK)^{1/4}C^2M_1^{1/4}r}{n_1^{1/2}q_1} ,
 \end{align} 
 Now estimating it as the second term case we get,
 $$r(\alpha \beta)^{1/2}X^{1-\eta/2}.$$
  
Combining all the cases, we get the lemma.  
  \end{proof}

\subsubsection{\bf Estimates for generic  $q \sim C$} Let $R_{generic}^{\neq 0}(X)$ denotes the contribution of $C \gg  X^{{1/2}-{\beta/2}+ \epsilon}$ and $n_2 \neq 0$ to $R(X)$ in \eqref{R(X) after cauchy}. Then
\begin{lemma}\label{R(X) generic bound}
	We have
	\begin{align*}
		R_{generic}^{\neq 0}(X) \ll  r(\alpha\beta)^{3/2}X^{3/4+7\beta/12+5\eta/12}.
	\end{align*}
	\end{lemma}
\begin{proof}
 Proof of this lemma goes along the same line as that of Lemma \ref{R(X) bound for small}. In the proof, we will exploit the fact that for large $C$, we have a better bound for $\mathfrak{J}$. Plugging in Lemma \ref{second L^2 bound} in the proof of Lemma \ref{omega nonzero bound}, we see that 
 \begin{align*}
 	\Omega^{\neq 0}\ll \frac{r^2(XK)^{1/2}C^{3}} {n_{1}^2M_1^{1/2}q_1X^{\beta}}\frac{C^{1/3}}{(XK)^{1/6}} \left(\frac{C^2n_1}{q_1^2}+\frac{Cn_1M_1}{q_1}+M_1^2 \right).
 \end{align*}
 Hence
 \begin{align*}
& R_{generic}^{\neq 0}(X) \ll \mathop{\sup_{ \substack{M_1\ll M_0 \\ C \ll Q}}}\frac{X^{5/12} }{C^{3/2}r^{2/3}}\sum_{\frac{n_1}{(n_1,r)}\ll C}n_1^{1/3}\Theta^{1/2} \\ 
 & \times \sum_{\frac{n_1}{(n_1,r)}|q_1|(n_1r)^\infty}\frac{rX^{1/4}K^{1/4}C^{3/2}} {n_{1}M_1^{1/4}q_1^{1/2}X^{\beta/2}}\frac{C^{1/6}}{(XK)^{1/12}} \left(\frac{Cn_1^{1/2}}{q_1}+\frac{(Cn_1M_1)^{1/2}}{q_1^{1/2}}+M_1 \right),
 \end{align*} 
 We can treat second and third term as in Lemma \eqref{R(X) bound for small}. Infact, by \eqref{third term}, the  expression corresponding to the third term $M_1$ is dominated by
 
 \begin{align*}
 \mathop{\sup_{ \substack{M_1\ll M_0 \\ C \ll Q}}}&\frac{X^{5/12} r}{C^{3/2}r^{2/3}}\frac{N_0^{1/6} 
 	{ (XK)^{1/4}C^{3/2}} M_1^{3/4}}{X^{\beta/2}}\frac{C^{1/6}}{(XK)^{1/12}} \\
& \ll \frac{r^{1/3}X^{5/12}N_0^{1/6} 
 	{ (XK)^{1/6}Q^{1/6}} M_0^{3/4}}{X^{\beta/2}}.
 \end{align*}
 Now using  $N_0 \ll X^{1/2}K^{3/2}r$ and $ M_0 \ll (\alpha\beta)^{2}Q^2X^{2\beta-1}$, we arrive at 
  \begin{align}\label{R(X) for third term}
 	r^{1/2}(\alpha\beta)^{3/2}X^{1/2-\beta/2}X^{3\beta/2-7/12}K^{5/12}Q^{5/3} \ll r^{1/2}(\alpha\beta)^{3/2}X^{3/4+7\beta/12+5\beta/12}.
 \end{align}
Similarly, using \eqref{second term}, the  expression corresponding to the second term  is dominated by
 \begin{align}\label{R(X) for secondd term}
 	 \mathop{\sup_{ \substack{ C \ll Q}}}\frac{X^{5/12} r}{C^{3/2}r^{2/3}}\frac{r^{1/2}N_0^{1/6} 
 		{ (XK)^{1/4}C^{3/2}} M_0^{3/4}}{X^{\beta/2}}\frac{C^{1/6}}{(XK)^{1/12}}\frac{C^{1/2}}{M_0^{1/2}}.
 \end{align}
 Replacing $C$ by $Q$, and accordingly $M_0$ by $(\alpha\beta)^{2}Q^2X^{2\beta-1}$, we see that 
 $$\frac{Q^{1/2}}{(\alpha\beta)QX^{\beta-1/2}} \ll 1,$$
 provided $3\beta+\eta >1$.
 And hence we get 
 \begin{align}\label{R(X) for second term}
r(\alpha\beta)^{3/2}X^{3/4+7\beta/12+5\eta/12}.
 \end{align}
 Finally, considering the expression corresponding to the first term ${Cn_1^{1/2}}/{q_1}$, we have 
 \begin{align}
 \mathop{\sup_{ \substack{M_1\ll M_0 \\ C \ll Q}}}\frac{X^{5/12} }{C^{3/2}r^{2/3}}\sum_{\frac{n_1}{(n_1,r)}\ll C}n_1^{1/3}\Theta^{1/2} 
 \sum_{\frac{n_1}{(n_1,r)}|q_1|(n_1r)^\infty}\frac{rX^{1/4}K^{1/4}C^{3/2}} {n_{1}M_1^{1/4}q_1^{1/2}X^{\beta/2}}\frac{C^{1/6}}{(XK)^{1/12}}\frac{Cn_1^{1/2}}{q_1}
 \end{align}
 Bounding $\frac{Cn_1^{1/2}}{q_1}$ by $\frac{Cn_1^{1/2}}{q_1^{1/2}}$ and estimating the resulting expression like the second term, we arrive at
  \begin{align*}
 \mathop{\sup_{ \substack{ C \ll Q}}}\frac{X^{5/12} r}{C^{3/2}r^{2/3}}\frac{r^{1/2}N_0^{1/6} 
 	{ (XK)^{1/4}C^{3/2}} M_0^{3/4}}{X^{\beta/2}}\frac{C^{1/6}}{(XK)^{1/12}}\frac{C^{1/2}}{M_0^{1/2}}\frac{C^{1/2}}{M_0^{1/2}}.
 \end{align*}
 This is similar to \eqref{R(X) for secondd term}, hence we get the same bound as in \eqref{R(X) for second term}.
 Finally combining all the cases, we get the lemma. 
\end{proof}
\subsection{Conclusion} We now pull together the bounds from Lemma \ref{R(X) zero frq bound}, Lemma \ref{R(X) bound for small} and Lemma \eqref{R(X) generic bound} to get 
\begin{align*}
	R(X) \ll r(\alpha\beta)^{3/2}X^{3/4+3\beta/4-3\eta/4}+r(\alpha \beta)^{3/2}X^{3/4+3\beta/4-\eta/2}+ r(\alpha\beta)^{3/2}X^{3/4+7\beta/12+5\eta/12}.
\end{align*}
 Here second term dominates the first one. Hence we get
\begin{align*}
R(X) \ll r(\alpha \beta)^{3/2}X^{3/4+3\beta/4-\eta/2}+ r(\alpha\beta)^{3/2}X^{3/4+7\beta/12+5\eta/12}.
\end{align*}
Equating the above exponents, we get the optimal choice for $\eta$
$$\eta=2\beta/11.$$
And hence 
$$R(X) \ll  (\alpha \beta)^{3/2} rX^{3/4+29\beta/44}.$$
Thus we have proved Theorem \ref{gl3thm21}.
\end{proof}


\section*{Acknowledgements}
Authors are thankful to Prof. Ritabrata Munshi for sharing his ideas, explaining his methods and his support throughout the work.  Authors also wish to thank Prof. Satadal Ganguly for his encouragement and  constant support. Authors are grateful to  Stat-Math Unit, Indian Statistical Institute, Kolkata for providing wonderful research environment. During this work, S. Singh was supported by D.S.T. inspire faculty fellowship no. DST/INSPIRE/$04/2018/000945$.

\end{document}